\title[Tangent Lie algebra of a diffeomorphism group]{Tangent Lie
  algebra of a diffeomorphism group and application to holonomy theory}
\author[B.~Hubicska]{Balázs Hubicska}
\email{\url{hubicska.balazs@science.unideb.hu}}
\author[Z.~Muzsnay]{Zolt\'an Muzsnay} 
\email{\url{muzsnay@science.unideb.hu}}
\address{University of Debrecen, Institute of Mathematics, Pf.~400, Debrecen,
  4002, Hungary}
\subjclass[2010]{22E65, 17B66, 53C29, 53B40}
\keywords{diffeomorphism group, infinite-dimensional Lie group, holonomy
  group, Finsler geometry.}
\newcommand{\halmazvonal}[2]{\left\{\,#1\mid #2\,\right\}}
\newcommand{\halmazpont}[2]{\left\{\,#1 : #2\,\right\}}
\newcommand{\Hol}{\mathcal{H}ol} 
\newcommand{\hol}{\mathfrak{hol}\hspace{1pt}} 
\newcommand{\ihol}{\mathfrak{hol}^*} 
\newcommand{\Holf}{\mathcal{H}ol_f(M)} 
\newcommand{\holf}{\mathfrak{hol}_f(M)} 
\newcommand{\R}{\mathbb R}
\newcommand{\N}{\mathbb N}
\newcommand*{\p}{\mathcal{P}}
\newcommand*{\I}{\mathcal{I}}
\newcommand*{\F}{\mathcal{F}}
\newcommand*{\G}{\mathcal G}
\newcommand*{\GL}{{\mathcal G}_L}
\newcommand*{\g}{\mathfrak g}
\newcommand*{\gl}{{\mathfrak g}_L}
\newcommand*{\X}[1]{{\mathfrak X}^\infty\!\left(#1\right)}
\newcommand{\ts}{\textsuperscript}
\newcommand{\tu}[1]{_{_{#1}}}
\newcommand{\TG}{\mathcal T_{\hspace{-1pt}o} \mathcal G}
\newcommand*{\curv}{\mathfrak{R}}
\newcommand{\diff}[1]{\mathcal{D}i\!f\!f^{\infty}(#1)}
\def\={\!=\!}
\theoremstyle{plain}
\newtheorem{theorem}{Theorem}[section]
\newtheorem{proposition}[theorem]{Proposition}
\newtheorem{lemma}[theorem]{Lemma}
\newtheorem{corollary}[theorem]{Corollary}
\theoremstyle{definition}
\newtheorem{definition}[theorem]{Definition}
\newtheorem{remark}[theorem]{Remark}
\begin{document}

\maketitle

\begin{abstract}
  In this paper we introduce the notion of tangent space $\TG$ of a (not
  necessary smooth) subgroup $\G$ of the diffeomorphism group $\diff M$ of
  a compact manifold $M$.  We prove that $\TG$ is a Lie subalgebra of the
  Lie algebra of smooth vector fields on $M$. The construction can be
  generalized to subgroups of any (finite or infinite dimensional) Lie
  groups.  The tangent Lie algebra $\TG$ introduced this way is a
  generalization of the classical Lie algebra in the smooth cases.  As a
  working example we discuss in detail the tangent structure of the
  holonomy group and fibered holonomy group of Finsler manifolds.
\end{abstract}

\section{Introduction}


Important geometric objects, structures or properties can often be
investigated through algebraic structures.  In many interesting cases, these
algebraic structures are groups, where the group operations are smooth maps.
Such groups became indispensable tools for modern geometry, analysis, and
theoretical physics.  Lie groups and diffeomorphism groups are the most
important examples for such structures.

Considering a Lie group $\G\tu{L}$, it is well known that most of the
important information about it is captured in its tangent object, the Lie
algebra $\g\tu{L}$.  Naturally, if $\G$ is a Lie subgroup of $\G\tu{L}$, then
its Lie algebra $\g$ is a Lie subalgebra of $\g\tu L$.  The Lie subalgebra
$\g\subset \g\tu{L}$ can be used to obtain information or eventually to
determine the subgroup $\G$.  In many relevant geometric situations, however,
this framework is not general enough because of two factor: Firstly,
$\G\tu{L}$ is not a (finite dimensional) Lie group but the (infinite
dimensional) diffeomorphism group $\diff M$ of some manifold $M$.  Secondly,
the subgroup $\G$ is not necessarily a Lie subgroup of $\diff M$.
Nevertheless, natural questions arise: \emph{can we introduce a tangential
  property and tangent objects to the subgroup $\G$ in this situation?  Does
  the set of tangent elements possess a special algebraic stricture?  Can this
  algebraic structure be used to get information about the subgroup and thus
  about geometric properties?}  In this paper we answer these questions.

We introduce the notion of tangent vector fields to a subgroup $\G$ of the
diffeomorphism group $\diff{M}$, where $M$ is a compact manifold.  Denoting
by $\TG$ the set of tangent vector fields to $\G$ at the identity, we prove
that $\TG$ is a Lie subalgebra of the Lie algebra of smooth vector fields
on $M$ (Theorem \ref{thm:erinto_algebra}).  It follows that subalgebras of
$\TG$ inherit the tangential properties, therefore the elements of a
subalgebra generated by vector fields tangent to the subgroup $\G$ are
tangent to $\G$ (Corollary \ref{cor:sub_algebra}).  This property can be
particularly interesting when the Lie bracket of two tangent vector fields
to $\G$ generates a new direction: the tangential property will be
satisfied in this new direction as well. As we show in Theorem
\ref{thm:expo}, the group generated by the exponential image of $\TG$ is a
subgroup of the closure of $\G$ in $\diff{M}$ which can give important
information about the group $\G$ itself, especially in the infinite
dimensional cases.

We note, that a similar tangential property was already introduced in
\cite[Definition 2]{Muzsnay_Nagy_2011}, but we also remark that the concept
had two major defects: the tangent property introduced in
\cite{Muzsnay_Nagy_2011} is not preserved under the bracket operation,
therefore in that approach it is not true that tangent vector fields to a
subgroup $\G$ generate a tangent Lie algebra to $\G$.  Secondly,
\cite{Muzsnay_Nagy_2011} was not able to guaranty the existence of the tangent
Lie algebra $\TG$ associated to $\G$.  We note that with our approach we are
able to overcome both deficiencies.

The main reason to investigate the tangent structure of a subgroup $\G$ of the
diffeomorphism group is that it can provide valuable information about the
group $\G$ itself.  This method can be very effective when $\G$ is for example
a symmetry group, the holonomy group, etc. We note that in many cases the
determination of $\TG$ or its subalgebras can be highly nontrivial, especially
in the infinite dimensional cases.  As working examples, we consider the
holonomy group and the fibered holonomy group of Finsler manifolds. The
holonomy group is the transformation group generated by parallel translations
with respect to the canonical connection along closed curves. For Riemannian
manifolds it has been extensively studied and now the complete classification
is known \cite{Borel_Lichn_1952, Berger_1955, Bryant_2000, Joyce_2000}.  In
particular, it is well known, that the holonomy group of a simply connected
Riemannian manifold is a closed Lie subgroup of the special orthogonal group
$SO(n)$.  Despite the analogues in the construction, Finslerian holonomy
groups can be much more complex and up to now, we do not know much about them:
For special spaces the holonomy can be a finite dimensional Lie group (see
\cite{Szabo_1981} and \cite{Kozma_2000}), but recent results show that there
are Finsler manifolds with infinite dimensional holonomy group
\cite{Muzsnay_Nagy_2012, Muzsnay_Nagy_2014, Muzsnay_Nagy_max_2015}.  These
latter results show the difficulties: on cannot use the well understood
principal bundle machinery in the investigation because the structure group
should be infinite dimensional.  In \cite{Michor_1991} P.~Michor proposed a
general setting for the study of infinite dimensional holonomy groups and
holonomy algebras which was the motivation for Z.~Muzsnay and P.T.~Nagy to
start investigating the tangent objects to a subgroup of the diffeomorphism
group \cite{Muzsnay_Nagy_2011}.  In this paper we are able to step forward:
using the results of Chapter \ref{sec:tangen_algebra}.~we are able to
introduce the notion of holonomy algebra and fibered holonomy algebra for
Finslerian manifolds.  By improving the results of \cite{Muzsnay_Nagy_2011} we
also prove that the curvature and the infinitesimal holonomy algebras
(resp.~their restrictions) are Lie subalgebras of the fibered holonomy
algebras (resp.~the holonomy algebra). We are confident that in the future,
the tools described above can be used successfully in the investigation of
geometric structures in general and in the holonomy theory in particular.

\bigskip

\section{Introduction}

\label{sec:finsl}

In this chapter we introduce the basic notions and concepts of Finsler
geometry which are necessary to understand in Chapter
\ref{sec:fibred_holonomy_algebra} and Chapter \ref{sec:holonomy_algebra} the
nontrivial application of the theory discussing the tangent structure of a
subgroup of the diffeomorphism group.  These notions are not necessary to
understand Chapter \ref{sec:tangen_algebra}, therefore the reader who is not
particularly interested in these applications, can jump directly to the next
chapter.

\smallskip

In this paper, $M$ denotes a $C^{\infty}$-smooth $n$-dimensional
manifold, $\X{M}$ is the Lie algebra of $C^{\infty}$ vector fields and
$\diff{M}$ is the group of $C^{\infty}$ diffeomorphisms of $M$.  We will
denote by $TM$ the tangent manifold and by
$\widehat{T}M=TM\setminus\{0\}$ the slit tangent manifold. Local
coordinate charts $(U, x^i)$ on $M$ induce local coordinate charts
$(\pi^{-1}(U), (x^i, y^i))$ on $TM$, where $\pi: TM \to M$ is the
canonical projection. The \textit{vertical distribution}
$\mathcal{V} TM\subset TTM$ on $TM$ is given by
$\mathcal{V} TM= \mathrm{Ker}\, \pi_{*}$.
 
\subsection{Finsler manifold}
\label{sec:finsl_subsection} \
\\[1ex]
A \emph{Finsler manifold} is a pair $(M,\mathcal F)$, where the norm
$\mathcal F \colon TM \to \mathbb{R}_+$ is a positively 1-homogeneous
continuous function, which is smooth on $\hat T M$ and the symmetric bilinear
form
\begin{displaymath}
  g_{x,y} \colon (u,v)\ \mapsto \ g_{ij}(x, y)u^iv^j=\frac{1}{2}
  \frac{\partial^2 \mathcal F^2_x(y+su+tv)}{\partial s\,\partial t}\Big|_{t=s=0}
\end{displaymath}
is positive definite at every $y\in \hat T_xM$.  The \emph{indicatrix} $\I_xM$
at $x \in M$ is a hypersurface of $T_xM$ defined by
\begin{equation}
  \label{eq:2}
  \I_xM \! =\! \halmazpont{y \in T_xM}{\mathcal F(y) \! = \! 1}.
\end{equation}
\emph{Geodesics} of $(M, \mathcal F)$ are determined by a system of second
order ordinary differential equations
\begin{math}
  \ddot{x}^i + 2 G^i(x,\dot x)=0,
\end{math}
$i = 1,\dots,n$ in a local coordinate system $(x^i,y^i)$ of $TM$, where
$G^i(x,y)$ are determined by the formula
\begin{math}
  4 G^i= g^{il}\bigl(2\frac{\partial g_{jl}}{\partial x^k} -\frac{\partial
    g_{jk}}{\partial x^l} \bigr) \, y^jy^k.
\end{math}

\subsection{Parallel translation}
\label{sec:parallel_subsection} \
\\[1ex]
A vector field $X(t)= X^i(t) \frac{\partial}{\partial x^i}$ along a curve
$c\colon [0,1]\to M$ is called \textit{parallel} if $D_{\dot{c} } X(t)=0$
where the covariant derivative is defined as
\begin{equation}
  D_{\dot{c} } X(t)=  \left( \frac{d X^i (t)}{dt} 
    + G_j ^i (c_t, X(t)) \, \dot{c_t}^j  \right) 
  \frac{\partial}{\partial x^i},
\end{equation}
with $G_j ^i = \frac{\partial G ^i}{\partial y^j}$.  Clearly, for any
$X_0\!\in\!T_{c(0)}M$ there is a unique parallel vector field $X(t)$ along the
curve $c$ such that $X_0\!=\!X(0)$. Moreover, if $X(t)$ is a parallel vector
field along $c$, then $\lambda X (t)$ is also parallel along $c$ for any
$\lambda \ge 0$. Then the \emph{homogeneous (nonlinear) parallel translation}
along a curve $c(t)$
\begin{equation}
  \label{eq:parallel_1}
  \p^t_{c}:T_{c_0} M \to T_{c_t}M
\end{equation}
is defined by the positive homogeneous map $\p^t_c : X_0\mapsto X_t$ given by
the value $X_t=X(t)$ of the parallel vector field with initial value
$X(0) =X_0$. We remark that \eqref{eq:parallel_1} preserves the Finslerian
norm, therefore it can be considered as a map between the indicatrices
\begin{equation}
  \label{eq:parallel_2}
  \p^t_{c}:\I_{c_0} M \to \I_{c_t}M.
\end{equation}
Moreover, since the parallel translation is a homogeneous map,
\eqref{eq:parallel_1} and \eqref{eq:parallel_2} determine each other.

\medskip

\subsection{Holonomy}
\label{sec:finsl_hol} \
\\[1ex]
The \emph{holonomy group} $\Hol_p(M)$ of a Finsler manifold $(M, F)$ at a
point $p\in M$ is the group generated by parallel translations along
piece-wise differentiable closed curves starting at $p$. Considering the
parallel translation \eqref{eq:parallel_2} on the indicatrix, a holonomy
element is a diffeomorphism
\begin{math}
  \p_{c}:\I_p \to \I_p,
\end{math}
therefore the holonomy group $\Hol_p (M)\subset \diff{\I_p}$ is a subgroup of
the diffeomorphism group of the indicatrix $\I_p$.  

In the particular case, when $(M, \F)$ is a simply connected Riemann manifold,
the holonomy group is a closed Lie subgroup of the special orthogonal group
$SO(n)$.  Finslerian holonomy groups can, however, be much more complex: in
\cite{Muzsnay_Nagy_2012, Muzsnay_Nagy_2014, Muzsnay_Nagy_max_2015} one can
find examples of Finsler manifolds with infinite dimensional holonomy groups.
Until now it is not known if the Finsler holonomy groups are (finite of
infinite dimensional) Lie groups or not.

\medskip 

\subsection{Horizontal lift and curvature}
\label{sec:horizontal_lift} \
\\[1ex]
The parallel translation on a Finsler manifold can also be introduced by
considering the associated Ehresmann connection
(cf. \cite{Szilasi_Lovas_Kertesz_2014}): the horizontal distribution is
determined by the horizontal lift $T_xM\to T_{(x,y)}TM$ defined in the local
basis as
\begin{equation}
  \label{eq:lift2}
  \left(\frac{\partial}{\partial x^i} \right)^{\!\! h} =
  \frac{\partial}{\partial x^i} -G^k _i (x,y) \frac{\partial}{\partial y^k},
\end{equation}
where $y\in T_x M$.  Since the horizontal distribution is complementary to the
vertical distribution we have the decomposition
\begin{math}
  T_y TM = \mathcal{H}_y \oplus \mathcal{V}_y
\end{math}
with canonical projectors $h\colon TTM\to \mathcal{H}$ and
$v\colon TTM\to \mathcal{V}$.  The image $\mathcal{H} \subset TTM$ is the
\emph{horizontal distribution} of the manifold.  The horizontal lift of a
curve $c\colon [0,1]\to M$ with initial condition $X_0\in T_{c_0}M$ is a curve
$c^h\colon [0,1]\to TM$ such that $\pi \circ c^h \= c$,
$\frac{d c^h}{dt}\=(\frac{d c}{dt})^h$ and $c^h(0) \= X_0$. Then the parallel
translation can be geometrically obtained as
\begin{math}
  \p^t_{c}(X_0) = c^h(t). 
\end{math}
We remark that the horizontal lift $\varphi^h_t$ of the flow $\varphi_t$ of a
vector field $X \!\in \!\X {M}$ is the flow of the horizontal lift of the
vector field $X^h\! \in \!\X {TM}$. Therefore the parallel translation along
the integral curves of $X$ can be calculated in terms of the horizontal lift
of the flow:
\begin{equation}
  \label{eq:parallel_3}
  \p^t_{\varphi} = \varphi_t^h. 
\end{equation}
The horizontal distribution $\mathcal{H} TM$ is, in general,
non-integrable. The obstruction to its integrability is given by the
\emph{curvature tensor}
\begin{math}
   R=\frac{1}{2}[h,h]
\end{math}
which is the Nijenhuis torsion of the horizontal projector \cite{Grifone_1972}.

\bigskip

\section{Tangent Lie algebra of a subgroup of the diffeomorphism group}
\label{sec:tangen_algebra}

\medskip

In this paragraph we investigate the tangential property and tangential
structure of subgroups of the diffeomorphism group.  Let $\G$ be a subgroup
of $\diff M$ where $M$ is a compact differentiable manifold.  We do not
suppose any special property on $\G$, in particular, we do not suppose that
$\G$ is a Lie subgroup of $\diff{M}$. Questions that we consider: can we
introduce a tangential property and tangent object to the subgroup $\G$?
Does the set of tangent elements possess a special algebraic structure?
Can this algebraic structure be used to get information about the subgroup?
In this paragraph, we answer all these questions.

A smooth curve $c\colon I \to M$ on the manifold $M$ has a
$(k\!-\!1)\ts{st}$-order singularity at $t=0$, if its derivatives vanish up to
order $k\!-\!1$, ($k\geq 0$).  It is well known that if a curve $c$ has a
$(k\!-\!1)\ts{st}$-order singularity at $0\in \R$ then its $k\ts{th}$ order
derivative $c^{(k)}(0)=X_p$ is a tangent vector at $p=c(0)$. In that case, the
curve $c$ is called a \emph{$k\ts{th}$-order integral curve} of the vector
$X_p\in T_pM$.  Extending this concept to vector fields, we can introduce the
following
\begin{definition}
  \label{def:int_curve}
  A $C^\infty-$smooth curve in the diffeomorphism group
  \begin{math}
    \varphi\colon I \to \diff{M},
  \end{math}
  $t\to \varphi_t$ is called an \emph{integral curve of the vector field}
  $X\in \X M$ if
  \begin{itemize}[topsep=2pt, partopsep=0pt,leftmargin=25pt]
    \setlength{\itemsep}{2pt} \setlength{\parskip}{0pt}
    \setlength{\parsep}{0pt}
  \item [\emph{(1)}] $\varphi_{0}=id_M$,
  \item [\emph{(2)}] there exists $k\in \N$ such that for any point $p\in M$
    the curve $t \to \varphi_t(p)$ is a $k\ts{th}$-order integral curve of
    \begin{math}
      X(p)\in T_{p}M.
    \end{math}
  \end{itemize}
  This $k\in \N$ is called the \emph{order} of the integral curve $\varphi_t$
  of the vector field $X$.
\end{definition}
\noindent
In particular, the flow $\varphi^X_t$ of
$X\in \X M$ is a $1\ts{st}$-order integral curve of $X$. Moreover, if $k>1$
and $t\to \varphi_t$ is a $k\ts{th}-$order integral curve of the vector field
$X$ then we have
\begin{equation}
  \label{eq:1}
  \varphi_0=id_M,\qquad
  \frac{\partial \varphi_t}{\partial t} \Big|_{t=0}\!\!\!\!\!=0,
  \quad \dots  \quad  
  \frac{\partial^{k-1} \varphi_t}{\partial t^{k-1}} \Big|_{t=0} \!\!\!\!\!= 0,
  \qquad 
  \frac{\partial^{k} \varphi_t}{ \partial t^{k}} \Big|_{t=0} \!\!\!\!\!=X.
\end{equation}
Let $\G\!\subset\!\diff{M}$ be an arbitrary subgroup of the
diffeomorphism group $\diff{M}$.  Using the terminology of Definition
\ref{def:int_curve} we introduce the following
\begin{definition}
  \label{def:tang_algebra}
  A vector field $X \in \X{M}$ is called \emph{tangent} to a subgroup
  $\G\!\subset\!\diff{M}$ of the diffeomorphism group if there exists an
  integral curve of $X$ in $\G$. The set of tangent vector fields of $\G$ is
  denoted by $\TG$.
\end{definition}
\noindent
\begin{remark}
  We have $X \in \TG$ if and only if there exists a $C^\infty-$smooth curve
  \begin{math}
    \varphi\colon I \to Di\!f\!\!f^{\infty}(M)
  \end{math}
  such that
  \begin{itemize}[topsep=2pt, partopsep=0pt,leftmargin=38pt]
    \setlength{\itemsep}{2pt} \setlength{\parskip}{0pt}
    \setlength{\parsep}{0pt}
  \item [(\emph{1})\ ] $\varphi_{t}\in \G$,
  \item [(\emph{2})\ ] $\varphi_{0}=id_M$,
  \item [(\emph{3})\ ] there exists $k\in \N$ such that equation \eqref{eq:1}
    is satisfied.
  \end{itemize}
\end{remark}
\noindent
One can observe that in Definition \ref{def:tang_algebra} we do not suppose
that $\G$ is a Lie subgroup of $\diff{M}$. Indeed, we use the differential
structure of the later to formulate the smoothness condition on the curve in
$\G$.  Nevertheless, we have the following

\begin{theorem}
  \label{thm:erinto_algebra}
  If $\G$ is a subgroup of $\diff{M}$, then $\TG$ is a Lie subalgebra of
  $\X M$.
\end{theorem}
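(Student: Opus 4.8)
The plan is to show that $\TG$ is closed under two operations: scalar multiplication together with vector addition (making it a linear subspace of $\X M$), and the Lie bracket (making it a subalgebra). The unifying strategy is that to prove a given vector field $Y$ is tangent to $\G$, I must exhibit an integral curve of $Y$ lying in $\G$. Since $\G$ is a group, I have the group operations (composition and inversion of diffeomorphisms) at my disposal, and the key idea will be to \emph{build} the required integral curves by algebraically combining the integral curves that are assumed to exist for the given tangent fields, then to read off the order of vanishing and the leading derivative via Taylor expansion at $t=0$.

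\medskip

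First I would dispose of the easy closure properties. For scalar multiplication by $\lambda > 0$, if $\varphi_t$ is a $k\ts{th}$-order integral curve of $X$, a time-reparametrization $t \mapsto \varphi_{\mu t}$ produces a $k\ts{th}$-order integral curve whose leading coefficient is scaled by $\mu^k$, so any positive multiple of $X$ is attained; to handle $\lambda \le 0$ (in particular $-X$), I would use the inverse curve $t \mapsto \varphi_t^{-1}$, which stays in $\G$ and whose leading $k\ts{th}$ derivative at $t=0$ is $-X$ (this follows from differentiating $\varphi_t \circ \varphi_t^{-1} = id_M$ and using that the lower-order derivatives vanish). Thus $\TG$ is closed under scalar multiplication.

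\medskip

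The more substantial part is additivity and the bracket, and here I expect the main obstacle to lie. Suppose $X, Y \in \TG$ with integral curves $\varphi_t, \psi_t \in \G$ of orders $j, k$ respectively. The natural candidate for an integral curve of $X + Y$ (or of a suitable multiple) is the composed curve $t \mapsto \varphi_t \circ \psi_t$, or more robustly a curve of the form $\varphi_{t^a} \circ \psi_{t^b}$ with exponents $a,b$ chosen to align the orders of vanishing so that the two leading terms appear at the same order and add. The difficulty is purely in the bookkeeping of the Taylor expansion of a composition of diffeomorphisms: one must track how $\frac{\partial^m}{\partial t^m}\big|_{0}(\varphi_t \circ \psi_t)(p)$ decomposes, verify that all derivatives below the target order vanish, and identify the surviving leading term as $X(p) + Y(p)$ (respectively, for the bracket, as $[X,Y](p)$). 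For the bracket specifically, the classical model is the group commutator $t \mapsto \varphi_t \circ \psi_t \circ \varphi_t^{-1} \circ \psi_t^{-1}$, whose second-order term recovers $[X,Y]$ in the flow case; here I would form the analogous commutator curve, again lying in $\G$ because $\G$ is a group, and compute its order and leading derivative. The technical care needed is that composition in $\diff M$ is not linear, so the leading-order computation must be carried out carefully at each point $p \in M$ using local coordinates, with the compactness of $M$ ensuring the order $k$ can be taken uniformly in $p$ (as required by Definition \ref{def:int_curve}(2)). Once the leading term is identified as the desired vector field and the lower derivatives are shown to vanish, the constructed commutator (after a possible reparametrization to normalize the leading coefficient, using the scalar-multiplication step already established) is an integral curve in $\G$ witnessing $[X,Y] \in \TG$, completing the proof that $\TG$ is a Lie subalgebra of $\X M$.
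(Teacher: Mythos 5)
Your proposal is correct and follows essentially the same route as the paper: scalar multiples via reparametrization $\varphi_{\sqrt[k]{\lambda}\,t}$ and the inverse curve (using $\partial_t^k\varphi_t^{-1}|_{t=0}=-X$), sums via a composition $\varphi_{c_1 t^{m_1}}\circ\psi_{c_2 t^{m_2}}$ with exponents chosen to align the orders of vanishing, and the bracket via the group commutator of the two integral curves, whose first potentially nonvanishing derivative (of order $k+l$) is identified with $\pm[X,Y]$ exactly as in the paper's Mauhart--Michor-style computation. The Taylor bookkeeping you defer is precisely the content of the paper's equations \eqref{eq:der_com_1}--\eqref{eq:der_com_4}, so there is no divergence in method, only in the level of detail.
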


\begin{proof}
  To prove the theorem, we have to show that
  \begin{subequations}
    \label{eq:sub_algebra}
    \begin{alignat}{4}
      \label{eq:sub_algebra_1}
      X,Y&\in \TG & & \quad \Rightarrow \quad& [X,Y]&\in \TG,
      \\
      \label{eq:sub_algebra_2}
      X,Y&\in \TG & & \quad \Rightarrow \quad& X+Y&\in \TG,
      \\
      \label{eq:sub_algebra_3}
      \lambda \in \R, \ X&\in \TG & & \quad \Rightarrow \quad& \lambda X &\in
      \TG.
    \end{alignat}
  \end{subequations}
  Indeed, let $X,Y\in \TG$, that is $X,Y\in \X{M}$ tangent to $G$. According
  to Definition \ref{def:int_curve}, there exist $k, l\in \mathbb{N}$ such
  that $\varphi_{t}, \psi _{t} \in \G$ are integral curves of $X$ and $Y$
  respectively. Let us suppose that $\varphi_{t}$ is a $k\ts{th}-$order
  integral curve of $X$ and $\psi_{t}$ is an $l\ts{th}$-order integral curve
  of $Y$ $(k,l\geq 1)$. Then
  \begin{equation}
    \label{eq:int_X}
    \varphi_0=id_M, \qquad   
    \left\{
      \frac{\partial^{i} \varphi_t}{\partial t^{i}} \Big|_{t=0} \!\!\!= 0
    \right\}_{1\leq i<k} \qquad  
    \frac{\partial^{k} \varphi_t}{ \partial t^{k}} \Big|_{t=0} =X,
  \end{equation}
  and
  \begin{equation}
    \label{eq:int_Y}
    \psi_0=id_M, \qquad   
    \left\{
      \frac{\partial^{j} \psi_t}{\partial t^{j}} \Big|_{t=0} \!\!\!\!\!\!= 0
    \right\}_{1\leq j<l}\quad  
    \frac{\partial^{l} \varphi_t}{ \partial t^{l}} \Big|_{t=0} =Y.
  \end{equation}

  \bigskip

  \noindent
  $\bullet$ \emph{Proof of} \eqref{eq:sub_algebra_1}.  The computation is
  similar to that of \cite{Mauhart_Michor_1992}: Considering the group
  theoretical commutator
  \begin{equation}
    \label{eq:commutator}
    \left[ \varphi _t, \psi _s \right]:=\varphi^{-1}_t\circ \psi^{-1}_s
    \circ \varphi _t\circ \psi _s,
  \end{equation}
  we get a two-parameter family of diffeomorphisms such that if one of the
  parameters $s$ or $t$ is zero then \eqref{eq:commutator} is the identity
  transformation.  From \eqref{eq:int_X} and \eqref{eq:int_Y} we also know
  that the first, potentially nonzero derivative is the $(k+l)\ts{th}$ order
  mixed derivative:
  \begin{equation}
    \label{eq:der_com_1}
    \begin{aligned}
      \frac{\partial ^{(k+l)} \left[ \varphi _t, \psi _s \right]}{\partial
        t^k \partial s^l} \Big| _{(0,0)} (p)
      & =\frac{\partial ^l}{\partial s^l} \Big| _{s=0} \left( \frac{\partial
          ^k \left(\varphi ^{-1} _s \circ \psi ^{-1} _t \circ \varphi _s \circ
            \psi _t (p)\right)}{\partial t^k} \Big| _{t=0} \right)
      \\
      & =\frac{\partial ^l}{\partial s^l} \Big| _{s=0} \left( d \left( \varphi
          ^{-1} _s \right)_{\varphi _s (p)} \circ \frac{\partial ^k \psi ^{-1}
          _t}{\partial t^k}\Big| _{t=0} \left( \psi _s (p)\right) \right),
    \end{aligned}
  \end{equation} 
  where $d \left( \varphi ^{-1} _s \right)_{\varphi _s (p)}$ denotes the
  tangent map (or Jacobi operator) of $\varphi ^{-1} _s$ at the point
  $\varphi _s (p)$. Since
  \begin{math}
    d \left( \varphi ^{-1} _{s=0} \right)_{\varphi _s (p)}=id,
  \end{math}
  the above formula can be written in the form
  \begin{equation}
    \label{eq:der_com_2}
    d\left(\frac{\partial ^l \varphi ^{-1} _s }{\partial s ^l} 
      \Big|_{s=0} \right)_p \frac{\partial ^k \psi ^{-1} _t (p)}{\partial
      t^k} \Big| _{t=0} + d\left(\frac{\partial ^k \psi ^{-1} _t }{\partial
        t ^k} \Big|_{t=0} \right) _p \frac{\partial ^l \varphi _s (p)}
    {\partial s^l} \Big| _{s=0}.
  \end{equation}
  From $\varphi _t \circ \varphi ^{-1} _t =id$ we get
  \begin{displaymath}
    0= \frac{\partial^k}{\partial t^k}\Big|_{t=0} \left( \varphi _t 
      \circ \varphi ^{-1} _t \right)  = X 
    + \frac{\partial^k ( \varphi ^{-1} _t )
    }{\partial t^k}\Big| _{t=0}
  \end{displaymath}
  which yields
  \begin{equation}
    \label{eq:varphi_inv}
    \frac{\partial^k(\varphi ^{-1} _t)}{\partial t^k} \big|_{t=0}= -X.
  \end{equation}
  Therefore we get that \eqref{eq:der_com_2} can be written as
  \begin{equation}
    \label{eq:der_com_3}
    d\left(\frac{\partial ^l \varphi_s }{\partial s ^l} \Big|_{s=0}
    \right)_p \frac{\partial ^k \psi_t (p)}{\partial t^k} \Big| _{t=0} 
    - d\left(\frac{\partial^k \psi_t}{\partial t ^k}\Big|_{t=0} \right)_p 
    \frac{\partial ^l \varphi _s (p)}{\partial s^l} \Big| _{s=0},
  \end{equation}
  which is the Lie bracket of the vector fields $X$ and $Y$, that is
  \begin{equation}
    \label{eq:der_com_4}
    \frac{\partial^{k+l} \left[ \varphi _t, \psi _s \right]}{\partial
      t^k \partial s^l} \Big|_{(0,0)} = \left[ Y, X\right]. 
  \end{equation}
  From \eqref{eq:der_com_4} we get that
  \begin{math}
    t \to \left[ \varphi_t, \psi_t \right]
  \end{math}
  is a $(k+l)\ts{th}$-order integral curve of $[X,Y]$ in $\G$. Therefore
  $[X,Y]\in \TG$ which proves \eqref{eq:sub_algebra_1}.
 
  \bigskip

    \noindent
    $\bullet$ \emph{Proof of} \eqref{eq:sub_algebra_2}.
    \\[1ex]
    For any $c_1,c_2,m_1,m_2\in \R$,
    $\phi_t=\varphi_{c_1 t^{m_1}} \circ \psi_{c_2 t^{m_2}}$ is a smooth curve
    in $\G$ with $\phi_0=\varphi_{0} \circ \psi_{0}=id_M$. Moreover, if $r=$
    denotes the least common multiple of $k$ and $l$ and
    \begin{displaymath}
      m_1=r/k, \quad m_2=r/l, \quad
      c_1= \bigl(m_1^k(r-k)! \bigr)^{-1/r}, \quad
      c_2= \bigl(m_2^l(r-l)!\bigr)^{-1/r},   
    \end{displaymath}
    one gets
    \begin{equation}
      \frac{\partial^r\phi_t}{\partial t^r}\Big|_{t=0}
      =  \frac{\partial^r}{\partial t^r}\Big|_{t=0} \left( \varphi_{c_1 t^{m_1}} 
        \circ \psi_{c_2 t^{m_2}} \right)  = X+Y,
    \end{equation}
    showing that $\psi_t$ is an $r\ts{th}$-order integral curve of $X+Y$ in
    $\G$, therefore $X+Y$ is tangent to $\G$.
    
    \bigskip

    \noindent
    $\bullet$ \emph{Proof of} \eqref{eq:sub_algebra_3}.
    \\[1ex]
    It is clear that in the case when $\lambda \geq 0$, one can reparametrize
    the integral curve of $X$, and using that the lower order terms are zero,
    we get
    \begin{equation}
      \frac{\partial^k \varphi_{\!_{\sqrt[k]{\lambda} t}}}{\partial t^k}  \Big| _{t=0} 
      = \lambda X.
    \end{equation}
    In the case when $\lambda < 0$ one can use \eqref{eq:varphi_inv} and we
    get
    \begin{equation}
      \frac{\partial^k}{\partial t^k} \Big| _{t=0}  
      \left(\varphi^{-1}_{\!_{\sqrt[k]{|\lambda |} t}}  \right) = -|\lambda|X = \lambda X
    \end{equation}
    From \emph{(21)} and \emph{(22)} we get that $\lambda X$ is tangent to
    $G$, that is $\lambda X \in \TG$, and from \emph{11b)} and \emph{11c)} we
    get that any linear combinations of $X$ and $Y$ are in $\TG$.
  \end{proof}

  \smallskip

  Motivated by the results of Theorem \ref{thm:erinto_algebra} we propose the
  following

\begin{definition}
  $T_0\G$ is called the tangent Lie algebra of the subgroup
  $\G\subset \diff{M}$.
\end{definition}
As a direct consequence of Theorem \ref{thm:erinto_algebra} we get the
following

\begin{corollary}
  \label{cor:sub_algebra}
  Let $\G$ be a subgroup of $\diff{M}$ and ${\mathcal S}$ be a subset of
  $\X M$ such that the elements of ${\mathcal S}$ are tangent to $\G$. Then
  the Lie subalgebra
  \begin{math}
    \langle { \, \mathcal S \, } \rangle_{\mathcal Lie}
  \end{math}
  of $\X M$ generated by the elements of ${\mathcal S}$ is also tangent to
  $\G$, that is
  \begin{displaymath}
    {\mathcal S}\subset \TG \qquad \Rightarrow \qquad    
    \big \langle  \, {\mathcal S}  \,    \big \rangle_{Lie} \subset \TG.
  \end{displaymath}
\end{corollary}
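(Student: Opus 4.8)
The plan is to leverage Theorem~\ref{thm:erinto_algebra} directly. The corollary asserts that if every element of a set $\mathcal{S}$ is tangent to $\G$, then the entire Lie subalgebra $\langle \mathcal{S}\rangle_{Lie}$ generated by $\mathcal{S}$ is tangent to $\G$. The key observation is that Theorem~\ref{thm:erinto_algebra} already establishes that $\TG$ is itself a Lie subalgebra of $\X M$. Thus the proof should reduce to a minimality/generation argument: the Lie subalgebra generated by $\mathcal{S}$ is by definition the smallest Lie subalgebra containing $\mathcal{S}$, and since $\TG$ is a Lie subalgebra containing $\mathcal{S}$, it must contain the generated subalgebra.

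First I would record the hypothesis $\mathcal{S}\subset \TG$. Then I would invoke Theorem~\ref{thm:erinto_algebra} to note that $\TG$ is a Lie subalgebra of $\X M$; in particular it is closed under the vector space operations \eqref{eq:sub_algebra_2}, \eqref{eq:sub_algebra_3} and under the Lie bracket \eqref{eq:sub_algebra_1}. Next I would recall the standard definition of the Lie subalgebra generated by a subset: $\langle \mathcal{S}\rangle_{Lie}$ is the intersection of all Lie subalgebras of $\X M$ containing $\mathcal{S}$, equivalently the smallest such subalgebra. The final step is the set-theoretic containment: since $\TG$ is one of the Lie subalgebras over which this intersection is taken (it contains $\mathcal{S}$ by hypothesis and is a Lie subalgebra by the theorem), the intersection defining $\langle \mathcal{S}\rangle_{Lie}$ is contained in $\TG$, giving $\langle \mathcal{S}\rangle_{Lie}\subset \TG$.

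**There is essentially no obstacle here**; the whole content lies in Theorem~\ref{thm:erinto_algebra}, and the corollary is a formal consequence. The only point requiring a word of care is to make the notion of ``generated Lie subalgebra'' precise, since the corollary's conclusion depends on $\langle\mathcal{S}\rangle_{Lie}$ being characterized by exactly the closure properties that $\TG$ is now known to enjoy. Concretely, every element of $\langle\mathcal{S}\rangle_{Lie}$ is obtained from the elements of $\mathcal{S}$ by finitely many applications of scalar multiplication, addition, and the bracket; an elementary induction on the number of such operations, using \eqref{eq:sub_algebra_1}--\eqref{eq:sub_algebra_3} at each step, confirms that each such element lands in $\TG$. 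Either formulation (the intersection characterization or the inductive one) yields the result in a single line once the theorem is in hand.
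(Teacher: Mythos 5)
Your proof is correct and matches the paper's intent exactly: the paper states Corollary~\ref{cor:sub_algebra} as a direct consequence of Theorem~\ref{thm:erinto_algebra} without further argument, and your write-up simply makes explicit the minimality (or inductive closure) reasoning that the paper leaves implicit. Nothing is missing; the content indeed lies entirely in the theorem.
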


\begin{remark}
  \label{rem:tangent_properties}
  Slightly different tangent properties of vector fields to a subgroup $\G$ of
  the diffeomorphism group were already introduced in
  \cite{Muzsnay_Nagy_2011}. We will refer to the property \cite[Definition
  2.]{Muzsnay_Nagy_2011} as the \emph{weak tangent property} and to
  \cite[Definition 4.]{Muzsnay_Nagy_2011} as the \emph{strong tangent
    property}.  Our language is justified by the following proposition which
  is clarifying the relationship between the tangent property introduced in
  Definition \ref{def:int_curve} and the tangent properties introduced in
  \cite{Muzsnay_Nagy_2011}:
\end{remark}

\begin{proposition}
  \label{prop:weak_and_strong_tangent}
  Let $\G$ be a subgroup of $\diff{M}$ and $X\in \X{M}$. Using the terminology
  of Remark \ref{rem:tangent_properties}:
  \begin{enumerate}[topsep=0pt, partopsep=10pt,leftmargin=25pt]
  \item [(i)] if $X$ is strongly tangent to $\G$, then $X\in \TG$.
  \item [(ii)] if $X\in \TG$, then it is weakly tangent to $\G$.
  \end{enumerate}
\end{proposition}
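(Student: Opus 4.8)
The plan is to read off the three tangency notions as statements about curves in $\G$ through the identity and to pass between them by adjusting the regularity and the order of contact. First I would fix the hierarchy: strong tangency of \cite[Def.~4]{Muzsnay_Nagy_2011} provides the most structured object (a smooth curve with prescribed \emph{first} derivative), membership $X\in\TG$ provides a smooth $k\ts{th}$-order integral curve, and weak tangency of \cite[Def.~2]{Muzsnay_Nagy_2011} asks only for a pointwise first-order derivative. The two implications then amount to descending one step in this hierarchy.

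For (i), I would start from the curve $\varphi\colon I\to\diff{M}$ supplied by strong tangency, with $\varphi_t\in\G$, $\varphi_0=\mathrm{id}_M$ and $\frac{\partial\varphi_t}{\partial t}\big|_{t=0}=X$. Such a $\varphi$ is exactly a first-order integral curve of $X$ in the sense of Definition \ref{def:int_curve} (the case $k=1$, where there are no intermediate vanishing conditions to verify), so $X\in\TG$ follows immediately from Definition \ref{def:tang_algebra}. Thus (i) is the routine direction and requires essentially no computation once the definitions are aligned.

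The content lies in (ii). Here $X\in\TG$ only guarantees a $k\ts{th}$-order integral curve $\varphi_t\in\G$ for some, possibly large, $k$, whose local expansion around each $p$ is, by \eqref{eq:1},
\begin{equation*}
  \varphi_t(p)=p+\tfrac{t^k}{k!}\,X(p)+o(t^k).
\end{equation*}
To manufacture the first-order datum demanded by weak tangency I would reparametrize time by $t=(k!\,s)^{1/k}$ and set $\widetilde\varphi_s:=\varphi_{(k!\,s)^{1/k}}$ for $s\ge 0$. Each $\widetilde\varphi_s$ again lies in $\G$, and substituting $t^k=k!\,s$ collapses the expansion to $\widetilde\varphi_s(p)=p+s\,X(p)+o(s)$, so the one-sided derivative $\frac{d}{ds}\big|_{s=0^+}\widetilde\varphi_s(p)$ exists pointwise and equals $X(p)$ exactly---precisely the weak tangency condition. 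Should that condition be stated two-sidedly, I would extend by $\widetilde\varphi_s:=(\varphi_{(k!\,|s|)^{1/k}})^{-1}$ for $s<0$ and invoke \eqref{eq:varphi_inv} to see that the left derivative is again $X(p)$.

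The hard part, and the conceptual heart of the statement, is the regularity drop hidden in this reparametrization: since $s\mapsto(k!\,s)^{1/k}$ is not smooth at $s=0$, the curve $\widetilde\varphi_s$ is in general only continuous there with a one-sided first derivative, and is not a $C^\infty$ curve in $\diff{M}$. This is exactly why the procedure recovers the weak property and cannot be upgraded to reproduce membership in $\TG$, and it isolates the genuine gap between the two notions. Everything else is bookkeeping: checking that the reparametrized and inverted curves stay inside the subgroup $\G$ (automatic, as $\G$ is closed under composition and inversion) and that the normalizing constant $k!$ is chosen so that the limiting derivative is $X$ itself rather than a nonzero multiple of it.
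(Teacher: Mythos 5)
Your part (ii) is essentially the paper's proof: from the $k$\ts{th}-order expansion $\varphi_t(p)=p+\tfrac{t^k}{k!}\bigl(X(p)+\omega(p,t)\bigr)$ you reparametrize by a $k$\ts{th} root of $t$ (with the $k!$ normalization), accept the loss of smoothness at $t=0$, and obtain a $C^1$ curve in $\G$ with first derivative $X$, which is exactly weak tangency in the sense of \cite[Definition 2]{Muzsnay_Nagy_2011}. That part is fine, including your observation that the non-smooth reparametrization is precisely why the argument cannot be reversed to land back in $\TG$.

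Part (i), however, rests on a misreading of the hypothesis, and this is a genuine gap. Strong tangency in the sense of \cite[Definition 4]{Muzsnay_Nagy_2011} is \emph{not} ``a smooth curve with prescribed first derivative'': it is the existence of a $C^\infty$-smooth \emph{$k$-parameter} commutator-like family $\phi_{t_1,\dots,t_k}\in\G$ which equals $id_M$ whenever any one of its parameters is zero and whose mixed partial derivative satisfies
\begin{displaymath}
  X=\frac{\partial^k \phi_{t_1\dots t_k}}{\partial t_1\cdots\partial t_k}\Big|_{(0,\dots,0)}.
\end{displaymath}
If strong tangency really supplied a smooth curve in $\G$ with $\varphi_0=id_M$ and $\dot\varphi_0=X$, the implication (i) would indeed be trivial, but no such curve is given; the whole point of the statement is to pass from a multi-parameter object to a one-parameter integral curve. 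The paper's proof does this by the diagonal trick: set $\varphi_t:=\phi_{t,\dots,t}$. Because $\phi$ is the identity on every coordinate hyperplane, every partial derivative $\partial^\alpha\phi|_0$ whose multi-index $\alpha$ misses some variable vanishes; hence all derivatives of the diagonal curve of order $<k$ vanish at $t=0$, while the $k$\ts{th} derivative is $k!\,X$ (only the full mixed partial survives, with multinomial coefficient $k!$). Thus $\varphi_t$ is a $k$\ts{th}-order integral curve of $k!\,X$ in $\G$, and $X\in\TG$ follows after absorbing the constant, e.g.\ by the scalar-invariance part \eqref{eq:sub_algebra_3} of Theorem \ref{thm:erinto_algebra} or by rescaling $t\to t/(k!)^{1/k}$. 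Without this diagonal argument and the verification that the lower-order derivatives vanish, your proof of (i) establishes a different, essentially vacuous statement rather than the proposition.
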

\begin{proof}
  $\emph{(i)}$ If $X\in \X{M}$ is a strongly tangent vector field to the
  subgroup $\G \subset \diff{M}$, there exists a $k$-parameter commutator like
  family of diffeomorphisms $\phi_{t_1\dots t_k}\in \G$ which is
  $C^\infty$-smooth in $\diff{M}$, $\phi_{t_1,\dots, t_k} = id_M$ whenever one
  of its parameters is zero and
  \begin{displaymath}
    X=    \frac{\partial^k \phi_{t_1\dots t_k}}{\partial t_1\! \dots
      \partial t_k} \Big|_{(0 \dots  0)} .
  \end{displaymath}
  Consequently, if we consider the map $t \to \varphi_t$ where
  $\varphi_t= \phi_{t,\ldots , t}$, we get a $1$-parameter family of
  diffeomorphisms which satisfies the conditions of Definition
  \ref{def:tang_algebra}. Therefore, the vector field $X$ is tangent to $\G$.

  To prove $\emph{(ii)}$, let us suppose that $\varphi_{t}$ is a $k\ts{th}$
  order integral curve of $X$. Then we have \eqref{eq:int_X} and one can write
  $\varphi_{t}(p)$ as
  \begin{equation}
    \varphi_{t}(p) = p + \tfrac{1}{k!} \,
    t^k \bigl(X(p) + \omega(p,t) \bigr) 
  \end{equation}
  where $\lim_{t\to 0}\omega (p,t)=0$. The reparametrization
  $t \to \psi_{t}:=\varphi_{k!\sqrt[k]{t}}$ gives a $C^1$-differentiable
  1-parameter family of diffeomorphism in $\diff M$ such that $\psi_{0}=id_M$
  and
  \begin{displaymath}
    \frac{\partial \psi_{t}}{\partial t} \Bigl|_{t=0}(p) 
    =  \frac{\partial \varphi_{k!\sqrt[k]{t}}}{\partial t} \Bigl|_{t=0}(p) = X(p),
  \end{displaymath}
  which proves \emph{(ii)}.
\end{proof}

\begin{remark}
  \label{rem:tangenty_motivation}
  One may wonder why to introduce a new tangent property when there are
  already two, the weak and the strong tangent property (using the terminology
  of Remark \ref{rem:tangent_properties}) introduced in the literature. As an
  answer we point out that, the concept in \cite{Muzsnay_Nagy_2011} has a
  major defect: the weak tangent property is not preserved under the bracket
  operation, therefore it is not true in general that weakly tangent vector
  fields to a subgroup $\G$ generate a weakly tangent Lie algebra to $\G$. To
  overcome this difficulty, the authors introduced the strongly tangent
  property but the strongly tangent property was not preserved under the
  linear combination. It follows that \cite{Muzsnay_Nagy_2011} and the
  succeeding papers using these techniques were not able to guaranty the
  existence of the tangent Lie algebra $\TG$ associated to $\G$.  With our
  approach we are able to overcome this major deficiency.
\end{remark}

The main feature of $\TG$ is that one can obtain information about the group
$\G$. Indeed, one has the following

\begin{theorem}
  \label{thm:expo}
  Let $\G$ be a subgroup of $\diff{M}$ and $\overline{\G}$ its topological
  closure with respect to the $C^\infty$ topology. Then the group generated by
  the exponential image of the tangent Lie algebra $\TG$ with respect to the
  exponential map $\exp\colon \X{M} \to \diff{M}$ is a subgroup of
  $\overline{\G}$.
\end{theorem}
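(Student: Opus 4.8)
The plan is to reduce the statement to a single approximation property and then exploit that the closure $\overline{\G}$ is itself a group. Since $\diff M$ is a topological group in the $C^\infty$ topology, the closure $\overline{\G}$ of the subgroup $\G$ is again a subgroup. Consequently, to prove that the group $\langle\exp(\TG)\rangle$ generated by $\exp(\TG)$ lies in $\overline{\G}$, it suffices to show the single inclusion $\exp(X)\in\overline{\G}$ for every $X\in\TG$: indeed, the set of such $X$ is symmetric, because $-X\in\TG$ by Theorem \ref{thm:erinto_algebra} and $\exp(-X)=\exp(X)^{-1}$, so every generator and its inverse lie in $\overline{\G}$, whence so does every finite product.

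So fix $X\in\TG$. By Definition \ref{def:int_curve} there is a $k\ts{th}$-order integral curve $\varphi_t\in\G$ of $X$, and applying the reparametrization from the proof of Proposition \ref{prop:weak_and_strong_tangent}(ii) we obtain a curve
\begin{displaymath}
  \psi_s:=\varphi_{\sqrt[k]{k!\,s}}\in\G,\qquad s\ge 0,
\end{displaymath}
which is a first-order integral curve of $X$. More precisely, Taylor's formula applied to the smooth curve $t\mapsto\varphi_t$ into $\diff M$ gives $\psi_s=id_M+s\bigl(X+\widetilde\omega(s)\bigr)$ with $\widetilde\omega(s)\to 0$ in $\X M$ as $s\to 0^+$; in particular $\psi_0=id_M$ and $\frac{\partial\psi_s}{\partial s}\big|_{s=0}=X$. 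Because $\G$ is a group and $\varphi_{(\cdot)}$ takes values in $\G$, each iterate $(\psi_{1/n})^n=\bigl(\varphi_{\sqrt[k]{k!/n}}\bigr)^{n}$ belongs to $\G$.

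It remains to identify the limit of these iterates. The plan is to prove the exponential limit formula
\begin{displaymath}
  \exp(X)=\lim_{n\to\infty}\bigl(\psi_{1/n}\bigr)^{n}
\end{displaymath}
with convergence in the $C^\infty$ topology, where $\exp(X)=\Phi^X_1$ is the time-one flow of $X$. Granting this, the left-hand side is a $C^\infty$-limit of elements of $\G$, hence $\exp(X)\in\overline{\G}$, and the reduction of the first paragraph finishes the proof.

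The main obstacle is precisely this limit formula, which in finite dimensions is a classical fact of Lie theory but here must be established in the infinite-dimensional diffeomorphism group, where $\exp$ is not a local diffeomorphism. I would treat $(\psi_{1/n})^n$ as an Euler-type scheme for the flow $\Phi^X$: comparing one step of the scheme with the flow, $\psi_{1/n}-\Phi^X_{1/n}=\tfrac1n\widetilde\omega(1/n)+O(1/n^2)$, so the per-step error is $o(1/n)$ in every $C^r$-norm. On the compact manifold $M$ the composition operators are uniformly Lipschitz in each $C^r$-norm on the relevant bounded sets, so a discrete Gronwall estimate accumulates the $n$ per-step errors into a total error $o(1)$, yielding $(\psi_{1/n})^n-\Phi^X_1\to 0$ in $C^\infty$. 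Alternatively, one may simply invoke the regularity of $\diff M$ for compact $M$ as a Lie group in the sense of Michor, for which this limit formula for $C^1$ curves is available. Carrying out the Gronwall estimate uniformly across all orders $r$, that is, in the Fréchet topology of $\X M$ rather than in a single $C^r$-norm, is the technically delicate point.
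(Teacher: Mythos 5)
Your proposal is correct and takes essentially the same route as the paper: the paper likewise uses the reparametrization from Proposition \ref{prop:weak_and_strong_tangent} to produce a $C^1$ curve $\psi_t\in\G$ with $\psi_0=id_M$ and $\frac{\partial\psi_t}{\partial t}\big|_{t=0}=X$, then obtains the convergence $\bigl(\psi_{t/n}\bigr)^n\to\exp(tX)$ uniformly in all derivatives, and concludes $\bigl\langle\exp(\TG)\bigr\rangle\subset\overline{\G}$ via the closure being a subgroup. The only difference is in the delicate limit formula that you flag as the main obstacle: where you sketch an Euler--Gronwall scheme (or propose invoking regularity in the sense of Michor), the paper simply cites Omori's Corollary 5.4 for exactly this statement, which is the standard and adequate way to handle it.
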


\begin{proof}
  From the proof of Proposition \ref{prop:weak_and_strong_tangent} we know
  that for any element $X\in \TG$ there exists a $C^1$-differentiable
  $1$-parameter family $\{\psi_t \} \subset \G$ of diffeomorphisms of $M$ such
  that $\psi_0=id_M$ and
  \begin{math}
    X=\frac{\partial \psi_t}{\partial t}\big|_{t=0}.
  \end{math}
  Then, using the argument of \cite[Corollary 5.4, p.~84]{Omori_1997} on
  $\psi_t$ we get that
  \begin{displaymath}
    \psi^{n}  \left(\tfrac{t}{n} \right) =\psi
    \left(\tfrac{t}{n} \right)   \circ \cdots
    \circ \psi   \left(\tfrac{t}{n} \right)
  \end{displaymath}
  in $\G$, as a sequence of $\diff{M}$, converges uniformly in all derivatives
  to $\exp(t X)$.  It follows that
  \begin{displaymath}
    \halmazvonal{\exp(tX)}{t\in\mathbb R} \subset \overline{\G},
  \end{displaymath}
  for any $X\!\in\!\TG$. Therefore, one has 
  \begin{math}
    \exp\, (\TG) \subset \overline{\G}
  \end{math}
  and if we denote by $\big\langle \exp (\TG) \big\rangle$ the group generated
  by the exponential image of $\TG$ we get
  \begin{displaymath}
    \big\langle\exp(\TG) \big\rangle  \subset  \overline{\G},
  \end{displaymath}
  which proves Theorem \ref{thm:expo}.
\end{proof}
We note that, assuming the manifold M is compact, we could avoid technical
difficulties. Indeed, in this case, the diffeomorphism group $\diff{M}$ is
an (infinite dimensional) manifold and the exponential image of the flow of
vector fields exists everywhere on $M$. For a more general and deeper
discussion of the subject see \cite{Wojtynski_2003}.

\bigskip

The concept worked out in Definition \ref{def:tang_algebra} and Theorem
\ref{thm:erinto_algebra} can be adapted not only for subgroups of the
diffeomorphism group but for any subgroup of any (finite or infinite
dimensional) Lie group:

\begin{definition}
  \label{def:tang_algebra_lie}
  Let $\G_{L}$ be a Lie group, $e\in \GL$ is the identity element of $\GL$ and
  $\g_{L}:=T_e\,\GL$ the Lie algebra of $\GL$.  If $\G\subset \GL$ is a
  subgroup of $\G_L$, then $X\in \gl$ is called tangent to $\G$ if there exist
  a $C^\infty-$smooth curve
  \begin{math}
    \varphi\colon I \to \GL
  \end{math}
  such that
  \begin{itemize}[topsep=2pt, partopsep=0pt,leftmargin=38pt]
    \setlength{\itemsep}{2pt} \setlength{\parskip}{0pt}
    \setlength{\parsep}{0pt}
  \item [(\emph{1})\ ] $\varphi_{t}\in \G$,
  \item [(\emph{2})\ ] $\varphi_{0}=e$,
  \item [(\emph{3})\ ] there exists $k\in \N$ such that $t \to \varphi_{t}$ is
    a $k\ts{th}$ order integral curve of $X$.
  \end{itemize}
  The set of tangent vector of $\G$ is denoted by $\TG$.
\end{definition}

Then, adapting the proof of Theorem \ref{thm:erinto_algebra} and Theorem
\ref{thm:expo} we can get the following

\begin{theorem}
  \label{thm:erinto_algebra_lie}
  If $\G$ is a subgroup of a Lie group $\GL$, then $\TG$ is a Lie subalgebra
  of $\gl$. The group $\big\langle\exp(\TG) \big\rangle$ generated by the
  exponential image of $\TG$ with respect to the exponential map
  $\exp\colon \gl \to \GL$ is a subgroup of the topological closure
  $\overline{\G}$ of $\G$ in $\GL$.
\end{theorem}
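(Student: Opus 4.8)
The plan is to transport the arguments of Theorem \ref{thm:erinto_algebra} and Theorem \ref{thm:expo} to the abstract setting, replacing the pointwise evaluation $p\mapsto\varphi_t(p)$ on $M$ by computations carried out in a single chart of $\GL$ centred at the identity $e$. The first thing I would establish is that the notion of $k\ts{th}$-order integral curve of $X\in\gl$ used in Definition \ref{def:tang_algebra_lie} is intrinsic: if a smooth curve $\varphi\colon I\to\GL$ satisfies $\varphi_0=e$ and has vanishing derivatives up to order $k-1$ in one chart about $e$, then the same holds in every chart and the $k\ts{th}$ derivative is a well-defined element of $T_e\GL=\gl$, exactly as for the leading term of a singular curve in a manifold (the fact already used in Section \ref{sec:tangen_algebra}). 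This lets me speak unambiguously of the leading term of such a curve without reference to points of a base manifold.

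For the Lie subalgebra assertion I would reproduce the three parts of Theorem \ref{thm:erinto_algebra} verbatim, reading every composition as the group operation of $\GL$, every inverse as group inversion, and every derivative as taken in a chart at $e$ (smoothness of all resulting families is automatic from the Lie group axioms). For the bracket I would form the group commutator $[\varphi_t,\psi_s]=\varphi_t^{-1}\psi_s^{-1}\varphi_t\psi_s$, which lies in $\G$ since $\G$ is a subgroup and equals $e$ whenever $s$ or $t$ vanishes, so that the first potentially nonzero mixed derivative occurs at order $(k,l)$. Writing the multiplication of $\GL$ in the chart as $\mu(a,b)=a+b+B(a,b)+\cdots$ with bilinear leading term $B$, the antisymmetric part $B(a,b)-B(b,a)$ is the Lie bracket of $\gl$, and the computation of \cite{Mauhart_Michor_1992} then yields $\frac{\partial^{k+l}}{\partial t^k\partial s^l}[\varphi_t,\psi_s]\big|_{(0,0)}=[Y,X]$, so $t\mapsto[\varphi_t,\psi_t]$ is a $(k+l)\ts{th}$-order integral curve of $[X,Y]$ in $\G$. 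The sum and scalar cases carry over unchanged: $\phi_t=\varphi_{c_1t^{m_1}}\circ\psi_{c_2t^{m_2}}$ with the same constants has $r\ts{th}$ derivative $X+Y$ by additivity of leading Taylor terms in the chart, and reparametrisation together with the intrinsic version of \eqref{eq:varphi_inv} (which follows from $\varphi_t\varphi_t^{-1}=e$) gives $\lambda X\in\TG$.

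For the second assertion I would follow Theorem \ref{thm:expo}: by the reparametrisation of Proposition \ref{prop:weak_and_strong_tangent}(ii), again valid in the chart, every $X\in\TG$ admits a $C^1$ one-parameter family $\{\psi_t\}\subset\G$ with $\psi_0=e$ and $\frac{\partial\psi_t}{\partial t}\big|_{t=0}=X$. Then the product-limit formula, namely that the $n$-fold product $\psi_{t/n}\cdots\psi_{t/n}$ converges to $\exp(tX)$, shows $\exp(tX)\in\overline{\G}$ for all $t$, whence $\exp(\TG)\subset\overline{\G}$ and therefore $\langle\exp(\TG)\rangle\subset\overline{\G}$, since $\overline{\G}$ is a subgroup of $\GL$.

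The main obstacle is precisely this last convergence. In a finite-dimensional Lie group the exponential map is defined everywhere and the product-limit formula is classical; in the diffeomorphism group of a compact $M$ it was supplied by \cite[Corollary 5.4]{Omori_1997}. For a general infinite-dimensional Lie group neither the global existence of $\exp$ nor the convergence $\psi_{t/n}\cdots\psi_{t/n}\to\exp(tX)$ is automatic, so I would state the exponential part of the theorem for \emph{regular} Lie groups (in the sense of Milnor and of Kriegl--Michor), where both are guaranteed. By contrast, the Lie subalgebra statement uses only the manifold structure near $e$ and the chart computation above, and is therefore unconditional.
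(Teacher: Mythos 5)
Your proposal follows essentially the same route as the paper: the paper offers no separate argument for this theorem, stating only that it follows by ``adapting the proof of Theorem \ref{thm:erinto_algebra} and Theorem \ref{thm:expo}'', and your chart-based transcription --- the group commutator computation for the bracket, reparametrized products for sums and scalar multiples, and the $C^1$ reparametrization plus product-limit formula for the exponential statement --- is exactly that adaptation, carried out correctly (including the preliminary observation that the $k\ts{th}$-order leading term of a curve with a $(k\!-\!1)\ts{st}$-order singularity at $e$ is a well-defined element of $\gl$, independent of the chart). The one place where you genuinely depart from the paper is your final caveat: you restrict the exponential assertion to regular Lie groups, whereas the paper asserts it for arbitrary (finite or infinite dimensional) Lie groups. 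Your caution is warranted and is, if anything, an improvement rather than a defect: for a general infinite-dimensional Lie group neither the global existence of $\exp\colon \gl \to \GL$ nor the convergence of the products $\psi_{t/n}\circ\cdots\circ\psi_{t/n}$ to $\exp(tX)$ is automatic, and the paper's own justification for the diffeomorphism-group case (Omori's Corollary 5.4) is specific to $\diff{M}$ with $M$ compact. Indeed, the theorem as stated already presupposes an exponential map $\exp\colon \gl \to \GL$, which is precisely what regularity supplies; so your version makes explicit a hypothesis the paper leaves implicit, while the Lie-subalgebra half, as you correctly note, needs no such assumption since it uses only the local manifold structure of $\GL$ near $e$.
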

It is clear that in the case when $\G$ is a Lie subgroup of $\GL$, then
$\TG=\g$ is just the usual Lie subalgebra of $\gl$ associated to the Lie
subgroup $\G$. Therefore Definition \ref{def:tang_algebra_lie} generalizes the
classical notion of the Lie subalgebra associated to a Lie subgroup.

\bigskip \bigskip

\section{An application: holonomy algebra}
 
\bigskip

The notion of the holonomy group was already introduced in Chapter
\ref{sec:finsl_hol}.  It is well known that in the particular case when
$(M, \F)$ is a Riemann manifold, the holonomy group is a compact Lie subgroup
of the orthogonal group $O(n)$ and its Lie algebra is a Lie subalgebra of
$\mathfrak o (n)$.  It is also clear that the holonomy group of a linear
connection is a subgroup of the linear group $GL(n)$ and its Lie algebra is a
Lie subalgebra of $\mathfrak {gl} (n)$.  However, the situation for a Finsler
manifold or in a more general context the holonomy of a homogeneous connection
can be much more complex. Examples show that in some cases the holonomy
group can not be a finite dimensional Lie group \cite{Muzsnay_Nagy_2012_hjm,
  Muzsnay_Nagy_2012, Muzsnay_Nagy_2014}.  Until now it is not known if the
Finsler holonomy groups are (finite of infinite dimensional) Lie groups or
not. Nevertheless, the theory developed in Chapter \ref{sec:tangen_algebra}
allows us to consider its tangent Lie algebra, the holonomy algebra.

\bigskip

\subsection{The fibered holonomy algebra and its Lie subalgebras}
\label{sec:fibred_holonomy_algebra} \

\bigskip

\noindent
Let $(M, \F)$ be a compact Finsler manifold. The notion of \emph{fibered
  holonomy group} $\Holf$ appeared in \cite{Muzsnay_Nagy_2011}: It is the
group generated by fiber preserving diffeomorphisms $\Phi$ of the
indicatrix bundle $(\I M, \pi ,M)$, such that for any $p\in M$ the
restriction $\Phi _p =\Phi|_{\I_p}$ is an element of the holonomy group
$\Hol_p(M)$.  It is obvious that
\begin{equation}
  \Holf \subset \diff {\I M},
\end{equation}
where $\Holf$ is a subgroup of the diffeomorphism group of the indicatrix
bundle. Until now it is not known whether or not $\Holf$ is a Lie subgroup of
$\diff {\I M}$.  The set of tangent vector fields to the group $\Holf$ denoted
as
\begin{equation}
  \label{eq:holf}
  \holf:=\mathcal T_0 \bigl(\Holf \bigr).
\end{equation}
\begin{definition}
  $\holf$ is called the \emph{fibered holonomy algebra} of the Finsler
  manifold $(M, \F)$.
\end{definition}
From Theorem \ref{thm:erinto_algebra} one can obtain the following
\begin{theorem}
  The fibered holonomy algebra $\holf$ is a Lie subalgebra of the Lie algebra
  of smooth vector fields $\X {\I M}$.
\end{theorem}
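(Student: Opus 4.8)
The plan is to obtain this statement as a direct specialization of the general Theorem \ref{thm:erinto_algebra}, applied not to the base manifold $M$ but to the total space of the indicatrix bundle $\I M$. Theorem \ref{thm:erinto_algebra} asserts that for any subgroup $\G$ of the diffeomorphism group of a \emph{compact} manifold, the tangent set $\TG$ is a Lie subalgebra of the smooth vector fields on that manifold. Since $\Holf$ is by construction a subgroup of $\diff{\I M}$, and since $\holf = \mathcal T_0(\Holf)$ by the defining equation \eqref{eq:holf}, the only genuine task is to verify that the hypotheses of Theorem \ref{thm:erinto_algebra} are satisfied with $M$ replaced by $\I M$ and $\G$ replaced by $\Holf$.

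First I would check that the indicatrix bundle $\I M$ is a compact manifold. Each fiber $\I_p M = \halmazpont{y \in T_pM}{\F(y) = 1}$ is a closed hypersurface of $T_pM$: it is the preimage of $\{1\}$ under the continuous function $\F$, and because the fundamental tensor $g_{p,y}$ is positive definite, $\F$ is a genuine (strongly convex) norm, so $\I_p M$ is bounded and in fact diffeomorphic to the sphere $S^{n-1}$ via the positive homogeneity of $\F$. Being closed and bounded in the finite-dimensional space $T_pM$, each fiber is compact. As $M$ is assumed compact and $\pi\colon \I M \to M$ is a locally trivial bundle with compact fiber, the total space $\I M$ is compact as well.

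With the compactness of $\I M$ in hand, Theorem \ref{thm:erinto_algebra} applies without modification to the subgroup $\Holf \subset \diff{\I M}$, and yields that $\mathcal T_0(\Holf)$ is a Lie subalgebra of $\X{\I M}$. By the definition $\holf = \mathcal T_0(\Holf)$, this is precisely the assertion. I do not anticipate any serious obstacle: the substantive content resides entirely in the general Theorem \ref{thm:erinto_algebra}, and the present result is simply its instantiation once the standing compactness hypothesis has been transferred from the base $M$ to the total space $\I M$ of the indicatrix bundle. The only point demanding a moment of care is that transfer, namely the verification that compactness of $M$ indeed forces compactness of $\I M$.
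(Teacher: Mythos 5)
Your proposal is correct and takes essentially the same route as the paper: the paper also obtains this theorem as an immediate instantiation of Theorem \ref{thm:erinto_algebra}, applied to the subgroup $\Holf \subset \diff{\I M}$ with the compact manifold $\I M$ in place of $M$. The only step you spell out --- that compactness of the base $M$ together with compactness of the indicatrix fibers forces the total space $\I M$ to be compact --- is a standard verification that the paper leaves tacit.
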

In the sequel we will investigate the two most important Lie subalgebras of
$\holf$ which can be introduced with the help of the curvature tensor (see
Paragraph \ref{sec:finsl_hol}) of a Finsler manifold: the curvature algebra
and the infinitesimal holonomy algebra.

\begin{definition}
  \label{def:curvature_algebra}
  A vector field $\xi\in \X{\I M}$ is called a \emph{curvature vector field}
  if there exist vector fields $X,Y \in \X{M}$ such that $\xi = R(X^h,Y^h)$.
  The Lie subalgebra $\curv$ of vector fields generated by curvature vector
  fields is called the \emph{curvature algebra}.
\end{definition}
It is easy to see that from the definition of the curvature tensor that a
curvature vector field can be calculated as
\begin{equation}
  \label{eq:R}
  \xi = R (X^h,Y^h)=\big[ X^h ,Y^h \big]-\big[ X ,Y \big]^h,
\end{equation}
and from the definition we have also $\curv \subset \X{\I M}$. Moreover, we
have the following

\begin{proposition}
  \label{prop:gorb-erinto-uj}\
  \begin{enumerate}
  \item The elements of the curvature algebra are tangent to the group $\Holf$.
  \item The curvature algebra $\curv$ is a Lie subalgebra of $\holf$.
  \end{enumerate}
\end{proposition}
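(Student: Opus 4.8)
The plan is to prove the two assertions of Proposition \ref{prop:gorb-erinto-uj} in order, deriving (2) as an almost immediate consequence of (1) together with the machinery already established. Since $\curv$ is by Definition \ref{def:curvature_algebra} the Lie subalgebra generated by the curvature vector fields, Corollary \ref{cor:sub_algebra} tells us that it suffices to show that each \emph{generator} $\xi = R(X^h,Y^h)$ is tangent to $\Holf$; the tangency of the whole algebra $\curv$ then follows automatically. Thus the crux is entirely contained in part (1), and part (2) is just the statement that $\curv = \langle\{\text{curvature vector fields}\}\rangle_{Lie} \subset \mathcal T_0(\Holf) = \holf$, which is Corollary \ref{cor:sub_algebra} applied to $\mathcal S = \{R(X^h,Y^h) : X,Y \in \X M\}$.

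To prove (1), I would exhibit an explicit integral curve in $\Holf$ whose leading-order derivative recovers $\xi$. The formula \eqref{eq:R}, namely $R(X^h,Y^h) = [X^h,Y^h] - [X,Y]^h$, identifies the curvature vector field as (essentially) a Lie bracket of horizontal lifts, so the natural candidate is a group-theoretic commutator of parallel translations, in direct analogy with the proof of \eqref{eq:sub_algebra_1} in Theorem \ref{thm:erinto_algebra}. Concretely, let $\varphi_t$ and $\psi_s$ be the flows of $X$ and $Y$ on $M$; by the remark around \eqref{eq:parallel_3}, the horizontal lifts $\varphi_t^h$ and $\psi_s^h$ are precisely the parallel translations along the integral curves of $X$ and $Y$, hence are holonomy-type diffeomorphisms of the indicatrix bundle lying in $\Holf$. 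I would then form the commutator curve
\begin{equation*}
  \Phi_t := \bigl[\varphi^h_t, \psi^h_t\bigr]
  = (\varphi^h_t)^{-1}\circ(\psi^h_t)^{-1}\circ\varphi^h_t\circ\psi^h_t,
\end{equation*}
which is a product of parallel translations along a closed (piecewise-integral) loop, so $\Phi_t \in \Holf$ for each $t$, and $\Phi_0 = id$. Exactly as in \eqref{eq:der_com_1}--\eqref{eq:der_com_4}, the first nonvanishing derivative of this commutator at $t=0$ is the second-order one and equals the Lie bracket $[\,\cdot\,,\cdot\,]$ of the horizontal lifts; the contribution of $[X,Y]^h$ is what appears when one carefully tracks that the commutator of the \emph{base} flows closes up, so that the net $2$nd-order term is precisely $R(X^h,Y^h)$. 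This shows $t\mapsto \Phi_t$ is a $2$nd-order integral curve of $\xi$ in $\Holf$, i.e.\ $\xi \in \holf$.

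The step I expect to require the most care is the bookkeeping that verifies the second derivative of the commutator curve reproduces \eqref{eq:R} rather than merely $[X^h,Y^h]$. The subtlety is geometric: the loop traced on the base $M$ by composing the flows of $X$ and $Y$ and their inverses does \emph{not} close up exactly at second order — its failure to close is governed by $[X,Y]$ on $M$ — and the parallel translation around this not-quite-closed path must be reconciled against a genuine closed holonomy loop. Two routes are available. One is to recompute the $2$nd-order mixed derivative directly via the Mauhart--Michor formula as in Theorem \ref{thm:erinto_algebra}, obtaining $[X^h,Y^h]$, and then separately argue that the correction term $[X,Y]^h$ is realized by adjoining the parallel translation along an integral curve of $[X,Y]$ (itself tangent to $\Holf$, using that $[X,Y]$ has a flow whose horizontal lift lies in $\Holf$), so that the combination \eqref{eq:R} is tangent by closure under the operations proved in Theorem \ref{thm:erinto_algebra}. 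The cleaner route, which I would prefer, is to observe that the commutator of the four parallel translations is itself parallel translation around a genuine closed loop in $M$, so $\Phi_t \in \Holf$ with no correction needed, and then the identity \eqref{eq:R} is simply the standard fact that the Nijenhuis-torsion curvature $R = \tfrac12[h,h]$ measures exactly this holonomy defect; the algebraic identity \eqref{eq:R} already packages the $[X,Y]^h$ correction, so the $2$nd-order derivative of the closed-loop commutator is $R(X^h,Y^h)=\xi$ on the nose. Either way, once $\xi\in\holf$ is established for every generator, parts (1) and (2) follow, with (2) being the formal application of Corollary \ref{cor:sub_algebra}.
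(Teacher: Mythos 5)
Your reduction of part (2) to part (1) via Corollary \ref{cor:sub_algebra} is exactly what the paper does, and is fine. The gap is in part (1), precisely at the point you yourself flag as needing the most care. The curve $\Phi_t=(\varphi^h_t)^{-1}\circ(\psi^h_t)^{-1}\circ\varphi^h_t\circ\psi^h_t$ does \emph{not} lie in $\Holf$: by definition the fibered holonomy group consists of \emph{fiber-preserving} diffeomorphisms of $\I M$ whose restriction to each fiber is a holonomy element, but $\Phi_t$ covers the base commutator $\varphi_t^{-1}\circ\psi_t^{-1}\circ\varphi_t\circ\psi_t$, which differs from $id_M$ at second order by $[X,Y]$. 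So $\Phi_t$ maps $\I_p$ into the indicatrix over a \emph{different} point; it is not fiber-preserving, and its restrictions are parallel translations along non-closed paths, hence not holonomy elements. Your preferred Route 2 rests on the claim that ``the commutator of the four parallel translations is itself parallel translation around a genuine closed loop in $M$,'' and that claim is false: the parallelogram traced on the base does not close, which is exactly the defect you described one sentence earlier. Route 1 fails for a structural reason: to invoke closure under addition from Theorem \ref{thm:erinto_algebra} you would need $[X^h,Y^h]$ and $-[X,Y]^h$ to be \emph{separately} tangent to $\Holf$, but every vector field tangent to $\Holf$ must be vertical (all elements of $\Holf$ project to $id_M$, so differentiating $\pi\circ\Phi_t=\pi$ kills the horizontal part), and neither $[X^h,Y^h]$ nor $[X,Y]^h$ is vertical; in particular the horizontal lift of the flow of $[X,Y]$ is not fiber-preserving and does not lie in $\Holf$, contrary to your parenthetical assertion.

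The missing idea --- and it is the heart of the paper's proof --- is to \emph{close the base loop explicitly}. The paper builds the open parallelogram $\alpha_t(p)$ from the flows of $X$ and $Y$, takes the curve $\beta_{t,s}=\psi_{-s}\varphi_{-s}\psi_s\varphi_s(p)$ joining $p$ to the endpoint of $\alpha_t(p)$, and defines $h_{t,p}=\p_{\alpha_t(p)}\circ\p_{\beta_t(p)}^{-1}$, the parallel translation along the genuinely closed loop $\alpha_t(p)\ast\beta_t^{-1}(p)$. This is a bona fide element of $\Hol_p(M)$ for every $p$, so the assembled bundle map $h_t$ lies in $\Holf$. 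Moreover the closing segment is not harmless bookkeeping but the very source of the correction term: at second order $\p_{\alpha_t}$ contributes $2[X^h,Y^h]$ while $\p_{\beta_t}^{-1}$ contributes $-2[X,Y]^h$, so
\begin{equation*}
  \frac{d^2}{dt^2}\Big|_{t=0}h_{t,p}
  =2\left(\big[X^h,Y^h\big]-\big[X,Y\big]^h\right)_p=2\,\xi_p ,
\end{equation*}
and after the reparametrization $t\to h_{t/\sqrt{2},p}$ one obtains a second-order integral curve of $\xi$ inside $\Holf$. Without this closing construction your candidate curve simply does not live in the group, so neither of your two routes can be completed as stated.
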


To prove the first part of the proposition, we have to show that the curvature
vector fields are tangent to the fibered holonomy group $\Holf$, that is they
are elements of $\holf$.  Let $\xi \in \X{\I M}$ be a curvature vector field
and $X,Y\in \X{M}$ such that $\xi\= R(X^h,Y^h)$. We denote by $\varphi$ and
$\psi$ the integral curves of $X$ and $Y$ respectively. Define
\begin{displaymath}
  \alpha_{t,s}  := 
  \left\{
    \begin{aligned}
      & \varphi_s, \quad & 0 \leq & \ s \leq t,
      \\
      \psi_{s-t}&\varphi_t, \quad & t \leq &\ s \leq 2t,
      \\
      \varphi_{2t-s}\psi_t & \varphi_t, \quad & 2t \leq & \ s \leq 3t,
      \\
      \psi_{3t-s} \varphi_{-t} \psi_t & \varphi_t, \quad & 3t \leq & \ s \leq
      4t.
    \end{aligned}
  \right.
\end{displaymath}
and
\begin{displaymath}
  \beta_{t,s} := \psi_{-s}\varphi_{-s} \psi_s \varphi_s,  \qquad 0\leq s \leq t.
\end{displaymath}
Then, for every $p\in M$ and fixed $t$ the map
\begin{math}
  \alpha_{t}(p)\colon s \to \alpha_{t,s}(p)
\end{math}
and
\begin{math}
  \beta_{t}(p)\colon s \to \beta_{t,s}(p)
\end{math}
are parametrized curves: $\alpha_{t}(p)\colon s \to \alpha_{t,s}(p)$ is a (not
necessarily closed) parallelogram and $\beta_t(p)$ joins the endpoints of
$\alpha_{t}(p)$. Indeed, for every $p\in M$ and fixed $t$ the endpoint of
$\alpha_t(p)$ coincides with the endpoint of $\beta_t(p)$ and consequently
\label{item:zart_2} the curve $\alpha_t(p)\ast \beta_t ^{-1}(p)$ defined as
going along the curve $\alpha_t(p)$ then continuing along $\beta^{-1}_t(p)$
(which is the curve $\beta_t(p)$ with opposed orientation) is a closed curve
that starts and ends at $p\in M$. Let us consider
\begin{equation}
  \label{eq:h}
  h_{t,p} := \p_{\alpha_t(p)\ast \beta_t ^{-1}(p)}
  = \p_{\alpha_t(p)} \circ \p_{\beta_t (p)} ^{-1} ,
\end{equation}
the parallel translation along
\begin{math}
  \alpha_t(p)\ast \beta_t ^{-1}(p).
\end{math}
We have the following

\begin{lemma} 
  \label{lemma:h_t_p}
  For any $p\in M$
  \begin{itemize}[topsep=2pt, partopsep=0pt,leftmargin=24pt]
    \setlength{\itemsep}{2pt} \setlength{\parskip}{0pt}
    \setlength{\parsep}{0pt}
  \item [(1)] $h_{t,p} \in \Hol_p(M)$,
  \item [(2)] $t\to h_{t,p}$ is a second order integral curve of the vector
    field $\xi_p:=\xi\big|_{\I_p}$ $\big(\xi_p\in \X{\I_p}\big)$.
  \end{itemize}
\end{lemma}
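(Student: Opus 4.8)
The plan is to read $h_{t,p}$ as the holonomy around a small loop and to convert every parallel translation into a horizontal flow by means of \eqref{eq:parallel_3}. Statement (1) is then essentially definitional: as already observed just before the lemma, for each fixed $t$ the concatenation $\alpha_t(p)\ast\beta_t^{-1}(p)$ is a piecewise-differentiable closed curve based at $p$, so that $h_{t,p}=\p_{\alpha_t(p)\ast\beta_t^{-1}(p)}$ is, by the very definition of the holonomy group in Paragraph \ref{sec:finsl_hol}, an element of $\Hol_p(M)$.

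For statement (2) I would first record that $h_{0,p}=\mathrm{id}_{\I_p}$, since at $t=0$ both $\alpha_0(p)$ and $\beta_0(p)$ collapse to the constant curve at $p$; it then suffices to compute the first two $t$-derivatives of $t\mapsto h_{t,p}$ at $t=0$, treating the factors $\p_{\alpha_t(p)}$ and $\p_{\beta_t(p)}$ separately. Each of the four legs of $\alpha_t(p)$ is an integral-curve segment of $X$ or of $Y$, so by \eqref{eq:parallel_3} the transport along it is the corresponding horizontal flow and $\p_{\alpha_t(p)}=\psi^h_{-t}\circ\varphi^h_{-t}\circ\psi^h_t\circ\varphi^h_t\big|_{\I_p}$, which is exactly the group-theoretic commutator of the flows of $X^h$ and $Y^h$ on $\I M$. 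Applying the commutator computation of Theorem \ref{thm:erinto_algebra} — that is, \eqref{eq:der_com_4} with $k=l=1$ — to the lifted fields $X^h,Y^h\in\X{\I M}$, the curve $t\mapsto\p_{\alpha_t(p)}$ has a second-order singularity at $t=0$ whose second derivative is a fixed nonzero multiple of $[X^h,Y^h]\big|_{\I_p}$.

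The factor $\p_{\beta_t(p)}$ is the horizontal transport along the base-commutator arc $s\mapsto\psi_{-s}\varphi_{-s}\psi_s\varphi_s(p)$, $0\le s\le t$. This base curve has a second-order singularity at $s=0$ whose second derivative is the corresponding multiple of $[X,Y](p)\in T_pM$; since its velocity vanishes at the basepoint, the first $t$-derivative of $\p_{\beta_t(p)}$ vanishes and its second derivative is obtained by horizontally lifting this base second-derivative vector, producing the same multiple of $[X,Y]^h\big|_{\I_p}$. Both factors are thus second-order integral curves through the identity, so $h_{t,p}=\p_{\alpha_t(p)}\circ\p_{\beta_t(p)}^{-1}$ has vanishing first derivative, and its second derivative is the difference of the two computed above. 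By the curvature formula \eqref{eq:R}, $R(X^h,Y^h)=[X^h,Y^h]-[X,Y]^h$, this difference is (a multiple of) $\xi_p$, which is the assertion.

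I expect the main obstacle to be the second-order bookkeeping for the $\beta$-factor together with the matching of the universal constants. One has to justify carefully that horizontal transport along the short base-commutator arc contributes precisely the horizontal lift of the base Lie bracket — and not some deformed field — and that the numerical constants arising from the diagonal commutator of horizontal flows on $\I M$ and from the base arc on $M$ combine so that their difference is exactly $R(X^h,Y^h)=\xi_p$. The natural computation produces a fixed nonzero scalar multiple of $\xi_p$, whose magnitude can be normalized by a reparametrization $t\mapsto ct$ as in the proofs of \eqref{eq:sub_algebra_2}–\eqref{eq:sub_algebra_3} and whose sign is fixed by the orientation of the loop; this normalization is harmless for the intended application, since by Theorem \ref{thm:erinto_algebra} the tangent algebra $\TG$ is closed under scalar multiplication.
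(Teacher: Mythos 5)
Your proposal follows essentially the same route as the paper's own proof: part (1) is handled definitionally via the closed loop $\alpha_t(p)\ast\beta_t^{-1}(p)$, and part (2) by expressing $\p_{\alpha_t(p)}$ as the diagonal commutator of the horizontal flows $\varphi^{X^h}_t,\varphi^{Y^h}_t$ (the paper cites Spivak for this bracket formula rather than its own \eqref{eq:der_com_4}, but it is the same computation), treating $\p^{-1}_{\beta_t}$ as the horizontally lifted base commutator, and combining via \eqref{eq:R}. The constant-matching you flag as the main obstacle resolves exactly as you anticipate: both diagonal commutators produce the factor $2$, giving $\tfrac{d^2}{dt^2}\big|_{t=0}h_{t,p}=2\xi_p$, which the paper normalizes by the reparametrization $t\mapsto h_{t/\sqrt{2},p}$.
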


\begin{proof}
  Indeed, for every $p\in M$ and sufficiently small $t$ the curve
  $\alpha_t(p)\ast \beta_t ^{-1}(p)$ is a closed loop starting and ending at
  $p$, therefore the parallel transport $h_{t,p}:\I_p\to\I_p$ is a holonomy
  transformation at $p$ and we get \emph{(1)} of the lemma.

  To show \emph{(2)} we first remark that $\alpha_0(p)$ and $\beta_0(p)$ are
  the trivial curves ($s\to \alpha_{0,s}(p) = \beta_{0,s}(p) \equiv p$),
  therefore the parallel translation along them is the identity transformation
  and
  \begin{equation}
    \label{eq:h_0}
    h_{0,p} = id_{\I_p}.
  \end{equation}
  On the other hand, as we have seen in Chapter \ref{sec:finsl}, the parallel
  transport along a curve is determined by the horizontal lift of the
  curve. Consequently, the parallel transport along the integral curves of the
  vector fields $X$ and $Y$ can be expressed with the flows of the horizontal
  lifts $X^h$ and $Y^h$.  Let us consider first the parallel transport along
  the curve $\alpha_t(p)$: the parallel transport of a vector $v\in \I_p$
  along the curve $\alpha_t(p)$ is
  \begin{displaymath}
    \p_{\alpha_t(p)} (v)  = 
    \left\{
      \begin{aligned}
        & \varphi_s^{X^h}(v), \quad & 0 \leq & \ s \leq t,
        \\
        & \varphi_{s-t}^{Y^h}\varphi_t^{X^h}(v), \quad & t \leq &\ s \leq 2t,
        \\
        & \varphi^{X^h}_{-(s-2t)}\varphi_t^{Y^h} \varphi_t^{X^h}(v), \quad &
        2t \leq & \ s \leq 3t,
        \\
        & \varphi^{Y^h}_{-(s-3t)} \varphi^{X^h}_{-t} \varphi_t^{Y^h}
        \varphi_t^{X^h}(v), \quad & 3t \leq & \ s \leq 4t.
      \end{aligned}
    \right. 
  \end{displaymath}
  Therefore, $\p _{\alpha _t (p)}$ corresponds to the infinitesimal (not necessarily
  closed) parallelogram having as sides the integral curves of the horizontal
  lifts $X^h$ and $Y^h$. From the well known properties of the Lie brackets
  (see for example \cite[p.162]{Spivak_1979}) we get that
  \begin{equation}
    \label{eq:alpha}
      \frac{d}{dt}\Big|_{t=0} \p _{ \alpha_{t}} (v) = 0,
      \qquad \mathrm{and} \qquad
      \frac{d^2}{dt^2}\Big|_{t=0} \p_{ \alpha _{t}} (v) = 2\left[ X^h,Y^h
      \right]_v.
  \end{equation}
  On the other hand, the parallel transport of a vector
  $w\in \I_{\alpha _t(p)}$ along $\beta_t^{-1}(p)$ can be calculated with the
  help of it's horizontal lift
  \begin{math}
    \p_{ \beta _t ^{-1}}(w) = \p_{ \beta _t } ^{-1}(w) =((\beta)^h(t))^{-1}
    (w),
  \end{math}
  where by the definition of the horizontal lift
  $\pi \circ (\beta)^h (t)=\beta (t)$ and $(\beta^{-1})^h (0)=w$ are
  fulfilled. Since 
  \begin{math}
    \frac{d}{dt}\big|_{t=0} \beta_{t} (p) = 0,
  \end{math}
  and 
  \begin{math}
    \frac{d^2}{dt^2}\big|_{t=0} \beta_{t} (p) (v) = 2\left[ X,Y \right]_p, 
  \end{math}
  we obtain
   \begin{equation}
     \label{eq:beta}
     \frac{d}{dt} \Big|_{t=0} \p^{-1}_{\beta_t} = 0
     \qquad \mathrm{and} \qquad
     \frac{d^2}{dt^2} \Big| _{t=0} \p ^{-1} _{ \beta_t } (v)
     = -\big( 2\left[ X,Y\right]^h \big)_v,
   \end{equation}
   thus, from the two equations of \eqref{eq:alpha} and the two equations of
   \eqref{eq:beta} we get
    \begin{equation}
      \label{eq:4}
      \frac{d}{dt} \Big|_{t=0} h_t(v)=0
      \qquad \mathrm{and} \qquad
      \frac{d^2}{dt^2} \Big| _{t=0} h_t(v)=2 
      \left(
        \big[ X^h,Y^h \big] \! -\! \big[ X,Y \big]^h
      \right)_v = 2 \xi_p.
  \end{equation}
  where we also used \eqref{eq:R}.  To summarize, we get from \eqref{eq:h_0} and
  \eqref{eq:4}:
  \begin{equation}
    h_{0,p}= \mathrm{id}\big|_{\I_p}, \qquad\qquad 
    \frac{d}{dt} \Big|_{t=0} h_{t,p}= 0,
    \qquad\qquad \frac{1}{2} \frac{d^2}{dt^2}\Big|_{t=0} h_{t,p}= \xi_p,
  \end{equation}
  which means that the reparametrized map $t\to h_{t/\sqrt{2},p}$ is a
  second order integral curve of the curvature vector field
  $\xi_p\in \X{\I_p}$ and proves point \emph{(2)} of the lemma.
\end{proof}

\medskip

\begin{proof}[Proof of Proposition \ref{prop:gorb-erinto-uj}]
  Let us consider the map $h_t:\I M \to \I M$ on the indicatrix bundle, where
  $h_t\big|_{\I_p}:=h_{t,p}$. From Lemma \ref{lemma:h_t_p} we get (by dropping
  the variable $p\in M$) that
  \begin{itemize}[topsep=2pt, partopsep=0pt,leftmargin=34pt]
    \setlength{\itemsep}{2pt} \setlength{\parskip}{0pt}
    \setlength{\parsep}{0pt}
  \item [\emph{(1)}] $h_{t} \in \Holf$,
  \item [\emph{(2)}] $t\to h_{t}$ is a second order integral curve of the
    vector field $\xi \in \X{\I}$.
  \end{itemize}
  which shows that the curvature vector field $\xi$ is tangent to $\Holf$ and
  proves the first part of the proposition. Applying Corollary
  \ref{cor:sub_algebra}, we get that the Lie algebra generated by the
  curvature vector field is tangent to $\Holf$ which proves the second part of
  the proposition.
\end{proof}

We remark that \emph{(1)} of Proposition \ref{prop:gorb-erinto-uj} is an
improvement of Proposition 3.~and Corollary 2.~of
\cite{Muzsnay_Nagy_2011}. Indeed, the tangent property proved in
\cite{Muzsnay_Nagy_2011} is weaker: $C^1$ instead of $C^\infty$
smoothness. Moreover, \cite{Muzsnay_Nagy_2011} uses a very strong
topological restriction on the manifold $M$ supposing it is
diffeomorphic to the $n$-dimensional euclidean space. In Proposition
\eqref{prop:gorb-erinto-uj} we presented a natural geometric
construction without any constraints on the topology of the manifold
$M$.

\medskip


\begin{definition}
  The \textit{infinitesimal holonomy algebra} $\ihol (M)$ of a Finsler
  manifold $(M,\F)$ is the smallest Lie algebra on the indicatrix bundle which
  satisfies the following properties:
  \begin{itemize}[topsep=2pt, partopsep=0pt,leftmargin=30pt]
    \setlength{\itemsep}{2pt} \setlength{\parskip}{0pt}
    \setlength{\parsep}{0pt}
  \item[1)] Every curvature vector field $\xi$ is an element of $\ihol (M)$,
  \item[2)] if $\xi , \eta \in \ihol (M)$, then
    $\left[\xi, \eta \right]\in \ihol (M)$,
  \item[3)] if $\xi \in \ihol (M)$ and $X\in \X{M}$, then the horizontal
    Berwald covariant derivative $\nabla _X \xi$ is also an element of
    $\ihol (M)$.
  \end{itemize}
\end{definition}
We have the following 

\begin{proposition}
  \label{prop:inf.hol.erinto} \
  \begin{enumerate}[topsep=2pt, partopsep=0pt,leftmargin=30pt]
  \item The elements of the infinitesimal holonomy algebra $\ihol (M)$ are
    tangent to $\Holf$.
  \item The infinitesimal holonomy algebra $\ihol(M)$ is a Lie subalgebra of
    $\holf$.
  \end{enumerate}
\end{proposition}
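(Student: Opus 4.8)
The plan is to derive both assertions at once from the single inclusion $\ihol(M)\subseteq\holf$, and to obtain that inclusion from the minimality built into the definition of $\ihol(M)$. By Theorem~\ref{thm:erinto_algebra}, $\holf=\mathcal T_0(\Holf)$ is a Lie subalgebra of $\X{\I M}$; by part~(1) of Proposition~\ref{prop:gorb-erinto-uj} it contains every curvature vector field; and being a Lie algebra it is closed under the bracket. Thus $\holf$ already satisfies conditions~1) and~2) of the definition of the infinitesimal holonomy algebra, and if I can show in addition that $\holf$ is stable under the horizontal Berwald covariant derivatives $\nabla_X$ ($X\in\X M$), then $\holf$ is one of the Lie algebras competing in that definition, so minimality forces $\ihol(M)\subseteq\holf$. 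Both claims then follow at once: every element of $\ihol(M)$ lies in $\holf$, i.e.\ is tangent to $\Holf$, which is~(1); and since $\ihol(M)$ is a Lie algebra sitting inside $\holf$ with the same bracket, it is a Lie subalgebra of $\holf$, which is~(2). The whole proof therefore reduces to the stability of $\holf$ under $\nabla_X$.

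To establish this I use that for a vertical $\xi\in\X{\I M}$ the horizontal Berwald covariant derivative is $\nabla_X\xi=[X^h,\xi]$; since the components of $X$ depend only on the base point while $\xi$ is vertical, a short coordinate computation shows that this bracket is again a vertical vector field on $\I M$, so the operation is well defined on $\X{\I M}$. Let $\xi\in\holf$ and choose, by Definition~\ref{def:tang_algebra}, a $k$-th order integral curve $h_t\in\Holf$ of $\xi$, so that $h_t=\mathrm{id}+\tfrac1{k!}t^k\xi+O(t^{k+1})$ and $h_t^{-1}=\mathrm{id}-\tfrac1{k!}t^k\xi+O(t^{k+1})$. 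Let $\varphi_s$ denote the flow of $X$ on $M$ and $\Phi_s:=\varphi_s^h$ its horizontal lift; by \eqref{eq:parallel_3} this is precisely the parallel translation along the integral curves of $X$, hence a norm-preserving diffeomorphism $\Phi_s\colon\I_p\to\I_{\varphi_s(p)}$ generated by $X^h$, and it is a genuine one-parameter group on $\I M$ because $M$ is compact.

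The key construction is the conjugated commutator
\[
  c_{t,s}:=\Phi_s\circ h_t\circ\Phi_s^{-1}\circ h_t^{-1}.
\]
First, $c_{t,s}\in\Holf$: for every $q\in M$, writing $p:=\varphi_{-s}(q)$ and letting $\p_\gamma=\Phi_s|_{\I_p}$ be the parallel translation from $p$ to $q$ along $u\mapsto\varphi_u(p)$, conjugation by $\p_\gamma$ maps $\Hol_p(M)$ isomorphically onto $\Hol_q(M)$, so $(\Phi_s\circ h_t\circ\Phi_s^{-1})|_{\I_q}=\p_\gamma\circ(h_t|_{\I_p})\circ\p_\gamma^{-1}\in\Hol_q(M)$; as $q$ ranges over $M$ this gives $\Phi_s\circ h_t\circ\Phi_s^{-1}\in\Holf$, and multiplying by $h_t^{-1}\in\Holf$ keeps $c_{t,s}$ in the group $\Holf$. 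Second, I expand $c_{t,s}$. Since $c_{t,0}=h_t\circ h_t^{-1}=\mathrm{id}$ and $c_{0,s}=\mathrm{id}$, every term of the joint Taylor expansion carries a positive power of both $t$ and $s$; as the $t$-expansion begins at order $t^k$ and $(\Phi_s)_*\xi-\xi=-s[X^h,\xi]+O(s^2)$, the unique lowest-degree term is
\[
  c_{t,s}=\mathrm{id}-\tfrac1{k!}\,t^{k}s\,[X^h,\xi]+\bigl(\text{terms of total degree}\ge k+2\bigr).
\]
Restricting to the diagonal $t=s=\tau$ yields $c_{\tau,\tau}=\mathrm{id}-\tfrac1{k!}\tau^{k+1}[X^h,\xi]+O(\tau^{k+2})$, a $C^\infty$ curve in $\Holf$ through the identity whose derivatives vanish up to order $k$ and whose $(k\!+\!1)$-st order term is a nonzero multiple of $[X^h,\xi]$. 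Hence $\tau\mapsto c_{\tau,\tau}$ is a $(k+1)$-th order integral curve of $\nabla_X\xi$ in $\Holf$, so $\nabla_X\xi\in\holf$, which completes the stability argument and therefore the proof.

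I expect the decisive difficulty to be precisely this last construction. The tempting shortcut---to read $\nabla_X\xi=[X^h,\xi]$ off the bracket-closure of the Lie algebra $\holf$ furnished by Theorem~\ref{thm:erinto_algebra}---is unavailable, since $X^h$ is in general \emph{not} tangent to $\Holf$ and so does not belong to $\holf$. What rescues the argument is the geometric fact that conjugation by parallel translation carries holonomy groups isomorphically onto one another, so the conjugate $\Phi_s\circ h_t\circ\Phi_s^{-1}$ stays inside $\Holf$ even though $\Phi_s$ does not. The remaining care is bookkeeping: because $c_{t,s}$ is trivial on both axes one must check that the surviving leading term is genuinely of type $t^{k}s$, so that the diagonal reparametrisation produces an honest $(k+1)$-th order integral curve, and one must verify the verticality of $[X^h,\xi]$; both are routine.
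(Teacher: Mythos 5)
Your proof is correct, and its skeleton coincides with the paper's: show that $\holf$ contains every curvature vector field (Proposition \ref{prop:gorb-erinto-uj}), is closed under the bracket (Theorem \ref{thm:erinto_algebra}), and is stable under every horizontal Berwald covariant derivative $\nabla_X$, so that the minimality in the definition of $\ihol(M)$ forces $\ihol(M)\subseteq\holf$, giving both assertions at once. The genuine difference is in the third ingredient. The paper does not prove the stability of $\holf$ under $\nabla_X$; it imports it by citing \cite[Proposition 4]{Muzsnay_Nagy_2011} (covariant derivatives of \emph{strongly} tangent fields are strongly tangent) combined with part \emph{(i)} of Proposition \ref{prop:weak_and_strong_tangent}. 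You prove it from scratch via the conjugated commutator $c_{t,s}=\Phi_s\circ h_t\circ\Phi_s^{-1}\circ h_t^{-1}$, where $\Phi_s$ is the flow of $X^h$, and your key observation is exactly the right geometric input: although $\Phi_s\notin\Holf$, the conjugate $\Phi_s\circ h_t\circ\Phi_s^{-1}$ stays in $\Holf$ because conjugation by parallel translation along a path carries $\Hol_p(M)$ onto $\Hol_q(M)$; the Mauhart--Michor-type expansion then produces a higher-order integral curve of (a multiple of) $[X^h,\xi]=\nabla_X\xi$ inside $\Holf$. This buys two things over the paper's version: the argument is self-contained, and it establishes stability under $\nabla_X$ on \emph{all} of $\holf$, which is what the minimality argument actually needs --- the cited chain literally covers only covariant derivatives of strongly tangent fields, while elements of $\holf$ (for instance brackets of curvature vector fields) need not be strongly tangent, so your route closes a looseness in the paper's own wording.

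Two small repairs. First, the diagonal curve $\tau\mapsto c_{\tau,\tau}=\mathrm{id}-\tfrac{1}{k!}\tau^{k+1}[X^h,\xi]+O(\tau^{k+2})$ is, in the sense of Definition \ref{def:int_curve}, a $(k\!+\!1)\ts{st}$-order integral curve of $-(k+1)[X^h,\xi]$, not of $[X^h,\xi]$ itself, since the $(k\!+\!1)\ts{st}$ derivative of $-\tfrac{1}{k!}\tau^{k+1}[X^h,\xi]$ at $\tau=0$ equals $-(k+1)[X^h,\xi]$; conclude by invoking the closure of $\holf$ under scalar multiplication, i.e.\ \eqref{eq:sub_algebra_3} of Theorem \ref{thm:erinto_algebra}. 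Second, the additive expansions such as $h_t=\mathrm{id}+\tfrac{1}{k!}t^k\xi+O(t^{k+1})$ should, strictly speaking, be carried out as derivative computations in the style of \eqref{eq:der_com_1}--\eqref{eq:der_com_4}; this is only a matter of presentation and matches the level of rigor the paper itself uses.
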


\begin{proof}
  From Proposition \ref{prop:gorb-erinto-uj} we know that the curvature vector
  fields are tangent to the fibered holonomy group. Moreover, from
  \cite[Proposition 4]{Muzsnay_Nagy_2011} and from \emph{(i)} of Remark
  \ref{prop:weak_and_strong_tangent} we get that the horizontal Berwald
  covariant derivative of tangent vector fields to $\Holf$ are also tangent to
  $\Holf$ which proves the first part of the proposition. As a consequence,
  the infinitesimal holonomy algebra is generated by tangent vector fields
  and, according to Corollary \ref{cor:sub_algebra}, it is tangent to $\Holf$
  proving the second part of the proposition.
\end{proof}

We remark that the first part of Proposition \ref{prop:inf.hol.erinto} is an
improvement of \cite[Theorem 2]{Muzsnay_Nagy_2011}, because in Proposition
\ref{prop:inf.hol.erinto} the strong topology condition on the manifold $M$ is
dropped.

\bigskip \bigskip

\subsection{Holonomy algebra and its Lie subalgebras}
\label{sec:holonomy_algebra} \

\bigskip

\noindent
Let $(M, F)$ be an $n-$dimensional Finsler manifold.  At any points $p\in M$
the indicatrix defined in \eqref{eq:2} is an $(n-1)$-dimensional compact
manifold in $T_pM$.  Therefore, the diffeomorphism group $\diff{\I_p}$ is an
infinite dimensional Fréchet Lie group whose Lie algebra is $\X{\I_p}$, the
Lie algebra of smooth vector fields on $\I_p$. As it was introduced in Chapter
\ref{sec:finsl_hol}, the holonomy group
\begin{equation}
  \Hol_p(M) \subset \diff {\I_p M},  
\end{equation}
is a subgroup of the diffeomorphism group $\diff {\I_p M}$.  The set of
tangent vector fields to the group $\Hol_p(M)$, denoted as
\begin{displaymath}
  \hol_p(M):=\mathcal T_0 \big(\Hol_p(M) \big).
\end{displaymath}

\begin{definition}
  $\hol_p(M)$ is called the \emph{holonomy algebra} of the Finsler manifold
  $(M,\F)$ at $p\in M$.
\end{definition}
From Theorem \ref{thm:erinto_algebra} one can obtain
\begin{theorem}
  The holonomy algebra $\hol_p(M)$ of a Finsler manifold $(M,\F)$ at $p\in M$
  is a Lie subalgebra of $\X {\I_p}$.
\end{theorem}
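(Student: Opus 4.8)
The plan is to apply Theorem \ref{thm:erinto_algebra} directly, with the compact manifold $\I_p$ playing the role of $M$ and the holonomy group $\Hol_p(M)$ playing the role of the subgroup $\G$. No new analytic work is required; the task reduces to checking that the standing hypotheses of that theorem are satisfied in the present situation, after which the conclusion follows verbatim.

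First I would recall that, as noted at the beginning of this subsection, the indicatrix $\I_p = \halmazpont{y \in T_pM}{\F(y) = 1}$ is the unit level set of the Finsler norm. Since $\F$ is continuous and positive $1$-homogeneous, $\I_p$ is a closed and bounded subset of the finite-dimensional vector space $T_pM$, hence compact; together with the positive definiteness of $g_{x,y}$ it is in fact a smooth $(n\!-\!1)$-dimensional compact hypersurface. This is precisely the compactness assumption on the underlying manifold required for the theory developed in Chapter \ref{sec:tangen_algebra}.

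Next I would observe that, by its construction in Chapter \ref{sec:finsl_hol}, $\Hol_p(M)$ is the group generated by the homogeneous parallel translations $\p_c \colon \I_p \to \I_p$ along piecewise differentiable loops based at $p$. Being generated by diffeomorphisms of $\I_p$, it is a genuine subgroup of $\diff{\I_p}$. Thus the pair $\bigl(\I_p,\, \Hol_p(M)\bigr)$ fits exactly the framework of Theorem \ref{thm:erinto_algebra}: a compact manifold together with an \emph{arbitrary} subgroup of its diffeomorphism group, with no Lie-subgroup assumption imposed.

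Finally, since by definition $\hol_p(M) = \mathcal{T}_0\bigl(\Hol_p(M)\bigr)$ is the set of vector fields in $\X{\I_p}$ that are tangent to $\Hol_p(M)$ in the sense of Definition \ref{def:tang_algebra}, Theorem \ref{thm:erinto_algebra} applies and yields immediately that $\hol_p(M)$ is a Lie subalgebra of $\X{\I_p}$. I expect there to be no real obstacle here: the entire content — closure under the bracket, under addition, and under scalar multiplication — was already established in the proof of Theorem \ref{thm:erinto_algebra}, and nothing specific to the holonomy setting enters. The only points demanding care are the two verifications above, namely the compactness of $\I_p$ and the fact that $\Hol_p(M)$ is indeed a subgroup of $\diff{\I_p}$.
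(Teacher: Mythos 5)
Your proposal is correct and matches the paper's own argument: the paper likewise obtains this theorem as an immediate consequence of Theorem \ref{thm:erinto_algebra}, noting that the indicatrix $\I_p$ is an $(n\!-\!1)$-dimensional compact manifold and that $\Hol_p(M)$ is a subgroup of $\diff{\I_p}$, so that $\hol_p(M)=\mathcal T_0\bigl(\Hol_p(M)\bigr)$ is a Lie subalgebra of $\X{\I_p}$. Your explicit verification of the two hypotheses (compactness of $\I_p$ and the subgroup property) is exactly the checking the paper leaves implicit.
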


\bigskip

In the sequel we identify two important Lie subalgebras of the holonomy
algebra of Finsler manifolds.
\begin{definition}
  A vector field $\xi_p\in \X{\I_p}$ on the indicatrix $\I_p\subset T_pM$ is
  called a curvature vector field at $p\in M$ if there exist tangent vectors
  $X_p,Y_p \in T_pM$ such that $\xi_p = R(X_p^h,Y_p^h)$.  The Lie subalgebra
  $\curv_p$ of vector fields generated by curvature vector fields at $p\in M$
  is called the \emph{curvature algebra at} $p$.
\end{definition}
The relationship between the curvature algebra $\curv_p$ at $p\in M$ and the
curvature algebra $\curv$ introduced in Definition \ref{def:curvature_algebra}
is:
\begin{displaymath}
  \curv_p= \left\{\,\xi_p=\xi|_{\I_p} \ \big | \ \xi \in \curv  \,\right\},
\end{displaymath}
that is $\curv_p$ is the restriction of $\curv$ to the indicatrix $\I_p$. We
have
\begin{proposition}
  \label{prop:gorb-erinto-uj_ind}\ The elements of the curvature algebra
  $\curv_p$ at $p\!\in\! M$ are tangent to the group $\Hol_p(M)$ and the
  curvature algebra $\curv_p$ is a Lie subalgebra of the holonomy algebra
  $\hol_p(M)$.
\end{proposition}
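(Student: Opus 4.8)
The plan is to reduce the statement to the single-fibre content already contained in Lemma~\ref{lemma:h_t_p}, in exactly the way the fibered version (Proposition~\ref{prop:gorb-erinto-uj}) was reduced to it. A curvature vector field at $p$ has the form $\xi_p = R(X_p^h, Y_p^h)$ for some $X_p, Y_p \in T_pM$. First I would extend these tangent vectors to globally defined vector fields $X, Y \in \X{M}$ with $X(p) = X_p$ and $Y(p) = Y_p$; such extensions always exist on a manifold, and since $M$ is compact their flows are complete, which is convenient for the loop construction. Because the curvature $R = \frac{1}{2}[h,h]$ is a tensor, the value of $R(X^h, Y^h)$ over the fibre $\I_p$ depends only on $X_p$ and $Y_p$ (the horizontal lift $X^h$ at a point of $\I_p$ is the lift of $X(p)=X_p$). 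Hence the restriction $\xi|_{\I_p}$ of the globally defined curvature vector field $\xi = R(X^h, Y^h)$ coincides with $\xi_p$ and is independent of the chosen extension. This tensoriality/extension remark is the only step requiring a moment's care.

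With $X, Y \in \X{M}$ in hand, Lemma~\ref{lemma:h_t_p} applies verbatim: its part (1) gives $h_{t,p} \in \Hol_p(M)$, and its part (2) states that the reparametrized curve $t \mapsto h_{t/\sqrt{2},\,p}$ is a second-order integral curve of $\xi_p \in \X{\I_p}$ with $h_{0,p}=\mathrm{id}$. Since this integral curve lies entirely in $\Hol_p(M)$, Definition~\ref{def:tang_algebra} shows that $\xi_p$ is tangent to $\Hol_p(M)$, that is $\xi_p \in \hol_p(M)$. This settles the first assertion for every generating curvature vector field at $p$.

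For the second assertion I would invoke Corollary~\ref{cor:sub_algebra} with the compact manifold $\I_p$ in place of $M$, applied to the subgroup $\Hol_p(M) \subset \diff{\I_p}$, taking $\mathcal S$ to be the set of all curvature vector fields at $p$. By the previous paragraph $\mathcal S \subset \hol_p(M)$, so the Lie subalgebra it generates satisfies $\langle \mathcal S \rangle_{Lie} \subset \hol_p(M)$. Since $\curv_p = \langle \mathcal S \rangle_{Lie}$ by definition and $\hol_p(M)$ is a Lie subalgebra of $\X{\I_p}$ (by Theorem~\ref{thm:erinto_algebra}, using that $\I_p$ is compact), this exhibits $\curv_p$ as a Lie subalgebra of $\hol_p(M)$, completing the proof.

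As for the main obstacle: there is none of real substance, since Lemma~\ref{lemma:h_t_p} already carries all the geometric weight, namely the holonomy loop $\alpha_t \ast \beta_t^{-1}$ whose second-order derivative realizes the curvature. The whole proposition is thus the pointwise shadow of the fibered statement, and the only genuine content beyond quoting the lemma is verifying that passing from the tangent vectors $X_p, Y_p$ at $p$ to global fields $X, Y$ leaves the curvature vector field on $\I_p$ unchanged.
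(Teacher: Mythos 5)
Your proof is correct and takes essentially the same route as the paper: the paper's own proof is just the one-line remark that the statement is a direct consequence of the computation of Proposition~\ref{prop:gorb-erinto-uj}, i.e.\ of Lemma~\ref{lemma:h_t_p}, which is exactly the reduction you carry out. The only additions in your write-up --- extending $X_p, Y_p$ to global fields and using tensoriality of $R$ to see that $\xi|_{\I_p}$ is independent of the extension, and invoking Corollary~\ref{cor:sub_algebra} on the compact manifold $\I_p$ --- are details the paper leaves implicit, and they are handled correctly.
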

The proof is a direct consequence of the computation of Proposition
\eqref{prop:gorb-erinto-uj}. Moreover, by localizing the infinitesimal
holonomy algebra at a point we can obtain
\begin{definition}
  The Lie algebra
  \begin{math}
    \ihol_p(M) := \halmazvonal{ \xi|_{\I_p}}{ \xi \in \ihol(M)}
  \end{math}
  of vector fields on the indicatrix $\I_p$ is called the infinitesimal
  holonomy algebra at the point $p\in M$.
\end{definition}
From Proposition \ref{prop:inf.hol.erinto} we get
\begin{proposition}
  \label{prop:inf.hol.erinto_ind}
  The elements of the infinitesimal holonomy algebra $\ihol_p (M)$ are tangent
  to the group $\Hol_p(M)$ and the infinitesimal holonomy algebra $\ihol_p$ is
  a Lie subalgebra of the holonomy algebra $\hol_p(M)$.
\end{proposition}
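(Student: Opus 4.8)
The plan is to derive the proposition by restricting the global statement of Proposition \ref{prop:inf.hol.erinto} to the single fiber $\I_p$. Recall that by definition $\ihol_p(M)=\{\,\xi|_{\I_p}\mid\xi\in\ihol(M)\,\}$, and that an element $\Phi$ of the fibered holonomy group satisfies $\Phi|_{\I_p}\in\Hol_p(M)$ for every $p\in M$. This fiberwise restriction is precisely the bridge between the two settings, so the whole argument will consist of reading the constructions over $\I M$ on the fiber above $p$.

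First I would establish that the elements of $\ihol_p(M)$ are tangent to $\Hol_p(M)$. Let $\xi_p\in\ihol_p(M)$ and choose $\xi\in\ihol(M)$ with $\xi_p=\xi|_{\I_p}$. By part (1) of Proposition \ref{prop:inf.hol.erinto}, $\xi$ is tangent to $\Holf$, so there is a $C^\infty$-smooth $k\ts{th}$-order integral curve $t\mapsto\Phi_t\in\Holf$ of $\xi$ in the sense of Definition \ref{def:int_curve}. Setting $h_t:=\Phi_t|_{\I_p}$ yields a smooth curve in $\Hol_p(M)$ with $h_0=id_{\I_p}$, because $\Phi_t$ is fiber preserving and its restriction to $\I_p$ lies in $\Hol_p(M)$ by the very definition of $\Holf$. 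Since the conditions defining a $k\ts{th}$-order integral curve are imposed pointwise — for each $v\in\I_p\subset\I M$ the orbit $t\mapsto\Phi_t(v)$ has vanishing derivatives up to order $k-1$ and $k\ts{th}$ derivative $\xi(v)=\xi_p(v)$ — the restricted curve $h_t$ is a $k\ts{th}$-order integral curve of $\xi_p$ in $\Hol_p(M)$. Hence $\xi_p\in\mathcal T_0\big(\Hol_p(M)\big)=\hol_p(M)$.

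For the second part I would show that $\ihol_p(M)$ is itself a Lie subalgebra of $\X{\I_p}$ and then combine this with the inclusion just obtained. The key observation is that every element of $\ihol(M)$ is a \emph{vertical} vector field on $\I M$: the curvature vector fields $R(X^h,Y^h)$ take values in the vertical distribution, and the two generating operations — the Lie bracket and the horizontal Berwald covariant derivative $\nabla_X$ — preserve verticality. On vertical fields the restriction map $\rho\colon\xi\mapsto\xi|_{\I_p}$ is a Lie-algebra homomorphism: in fiber coordinates such fields carry only $\partial_{y^a}$-components, so their bracket involves only $y$-derivatives and $[\xi,\eta]|_{\I_p}=[\,\xi|_{\I_p},\eta|_{\I_p}\,]$, while linearity is immediate. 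Consequently $\ihol_p(M)=\rho\big(\ihol(M)\big)$ is the homomorphic image of a Lie algebra, hence a Lie subalgebra of $\X{\I_p}$, and by the first part it is contained in $\hol_p(M)$, so it is a Lie subalgebra of the holonomy algebra.

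I expect the only point requiring genuine care to be the interchange of restriction and bracket, i.e.\ the verticality of the generators of $\ihol(M)$ and the resulting homomorphism property of $\rho$; the tangency statement itself is a routine matter of evaluating the pointwise conditions of Definition \ref{def:int_curve} along the fiber. Alternatively, one could bypass the homomorphism discussion entirely and argue exactly as in the proof of Proposition \ref{prop:inf.hol.erinto}: the curvature vector fields at $p$ are tangent to $\Hol_p(M)$ by Proposition \ref{prop:gorb-erinto-uj_ind}, the Berwald derivative of a tangent field remains tangent, and then Corollary \ref{cor:sub_algebra} gives at once that the generated algebra $\ihol_p(M)$ is tangent to $\Hol_p(M)$ and therefore a Lie subalgebra of $\hol_p(M)$.
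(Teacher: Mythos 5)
Your proposal is correct and takes essentially the same route as the paper: the paper obtains Proposition \ref{prop:inf.hol.erinto_ind} precisely by localizing Proposition \ref{prop:inf.hol.erinto} to the fiber $\I_p$, which is your restriction argument, and you merely supply the details the paper leaves implicit (the pointwise nature of the integral-curve conditions under restriction, and the verticality of the elements of $\ihol(M)$ which makes $\xi\mapsto\xi|_{\I_p}$ a Lie-algebra homomorphism, so that $\ihol_p(M)$ is closed under the bracket). Your closing alternative likewise mirrors the paper's proof of the fibered statement, though it should be used with care, since the Berwald covariant derivative is not an operation on vector fields defined only on the single indicatrix $\I_p$, so $\ihol_p(M)$ cannot be generated purely fiberwise; the restriction argument you give first is the clean one.
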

We note that by the construction of the infinitesimal holonomy algebra, the
curvature vector fields are elements of $\ihol_p(M)$, therefore we have the
sequence of the Lie algebras
\begin{equation}
  \label{eq:lie_algebras}
  \curv_p(M) \ \subset \  \ihol_p(M) \ \subset \ \hol_p(M) \ \subset \ \X{\I_p}.
\end{equation}
We also remark that the first parts of the statement of Proposition
\ref{prop:gorb-erinto-uj_ind} and \ref{prop:inf.hol.erinto_ind} are
improvements of the results of \cite{Muzsnay_Nagy_2011} because the tangential
property of the Lie algebra is improved: we can guaranty $C^\infty$-smoothness
instead of $C^1$-smoothness.

\bigskip \bigskip

\noindent
\textbf{Concluding remarks.}  

\bigskip

\noindent
Many interesting geometric results can be obtained on the holonomy structure
from the Lie algebras \eqref{eq:lie_algebras} through the tangent property.
Indeed, by using Theorem \ref{thm:expo}, one can find examples where, in
contrast to the Riemannian case, the holonomy group $\Hol_p(M)$ is not a
compact Lie group \cite{Muzsnay_Nagy_2012_hjm, Muzsnay_Nagy_2012,
  Muzsnay_Nagy_2014}, or where the closure of the holonomy group is the
infinite dimensional Lie group $\mathcal{D}i\!f\!f^{\infty}_+(\I_p)$ of the
orientation preserving diffeomorphism group of the indicatrix
\cite{Hubicska_Muzsnay_2017,Muzsnay_Nagy_max_2015}.  All these results were
obtained by using the tangent property of the curvature algebra $\curv_p(M)$
and the infinitesimal holonomy algebra $\ihol_p(M)$. The method developed in
Chapter \ref{sec:tangen_algebra}, however, allows us to introduce in a natural
and canonical way a potentially larger Lie algebra, the holonomy algebra,
which is the \emph{tangent Lie algebra} of the holonomy group. This Lie
algebra gives the best linear approximation of the holonomy group. The
technique can be applied in other fields of geometry as well. We are convinced
that the method, exploring the tangential property of a group associated with
a geometric structure, can be used successfully to investigate various
geometric properties.

\bigskip

\textbf{Aknowledgement:} The authors would like to thank the referee for
the constructive comments and recommendations which contributed to
improving the paper.

\bigskip \bigskip

\end{document}